\documentclass[11pt]{article}

\usepackage{graphicx} 
\usepackage{amssymb, amsmath, amsthm, amsfonts}

\usepackage[dvipsnames]{xcolor}

\usepackage{tikz}
\usepackage{stmaryrd} 
\usepackage{thm-restate}
\usepackage{mathtools}
\usepackage{algpseudocode}
\usepackage{algorithm}
\usepackage{enumerate}
\usepackage{comment}
\usepackage[margin=1in]{geometry}

\usepackage{authblk}

\usepackage{hyperref}
\hypersetup{
  linkcolor  = Bittersweet,
  citecolor  = MidnightBlue,
  colorlinks = true
}
\usepackage{cleveref}

\newtheorem{theorem}{Theorem}[section]
\newtheorem{lemma}[theorem]{Lemma}
\newtheorem{cor}[theorem]{Corollary}
\newtheorem{conj}[theorem]{Conjecture}
\newtheorem{question}[theorem]{Question}
\newtheorem{prop}[theorem]{Proposition}

\newtheorem{claim}{Claim}

\theoremstyle{definition}
\newtheorem{definition}[theorem]{Definition}
\newtheorem{remark}[theorem]{Remark}
\newtheorem{example}{Example}
\DeclareMathOperator\dist{dist}
\newcommand{\N}{\mathbb{N}}

\title{Small $q$-kernels in digraphs with minimum in-degree $\delta$}

\author[1]{Geoffrey Boyer}
\author[2]{Matt Burnham\thanks{mattmath@iastate.edu}}
\author[3]{Daniela \v{C}ern\'a\thanks{daniela.cerna@cvut.cz}}
\author[4]{Stephen G. Hartke\thanks{stephen.hartke@ucdenver.edu}}
\author[5]{Isaiah Hollars\thanks{isaiah.hollars@sc.edu}}
\author[2]{Joel Jeffries}
\author[2]{Sydney Miyasaki}
\author[6]{Tobias Timofeyev}

\affil[1]{College of Charleston}
\affil[2]{Iowa State University}
\affil[3]{Czech Technical University in Prague} 
\affil[4]{University of Colorado Denver}
\affil[5]{University of South Carolina}
\affil[6]{University of Vermont}

\begin{document}

\maketitle

\begin{abstract}
For a digraph $D$, a subset $Q\subseteq V(D)$ is called a $q$-kernel if $Q$ is an independent set and all vertices in $V(D)$ are reachable from $Q$ via a directed path of length at most $q$. Given integers $q\geq 2$ and $\delta\geq 1$, Spiro~\cite{spiro2026generalized} posed the question: what is the smallest constant $c_{\delta,q}$ such that every digraph $D$ with minimum in-degree $\delta$ has a $q$-kernel of size at most $c_{\delta,q}|V(D)|$? We show the constants $c_{\delta,q}$ are monotone in both $\delta$ and $q$, and we improve upon the known upper bounds for $c_{\delta,q}$. Our main results show $\frac{1}{\delta+1} \leq c_{\delta,q}\leq \frac{1}{\lfloor\sqrt{\delta+1}\rfloor+1}$
for all $q \geq 3$ and $\delta \geq 1$, and $ c_{\delta,q}=\frac{1}{\delta+1}$ whenever $\delta \geq 1$ and $q \geq \left\lceil\frac{3\delta}{2}\right\rceil + 1$.

\end{abstract}

\newpage 

\section{Introduction}
\subsection{Background}

Throughout this paper, all digraphs are finite and contain no loops or multi-edges. See Section~\ref{sec-prelim} for basic digraph terminology. Let $D$ be a digraph. A set $Q \subseteq V(D)$ is called a \textit{kernel} if it is an independent set and all vertices in $V(D)$ are contained in $Q$ or the out-neighborhood of $Q$.\footnote{We note that some authors define a kernel with the opposite orientation. While Erd\H{o}s et al.~\cite{EGMST} define kernels with outwardly oriented edges as we do, Von Neumann and Morgenstern~\cite{NeumannMorgenstern} defined kernels with inwardly oriented edges instead.}

Not every digraph contains a kernel. For example, a directed odd cycle contains no kernel. For this reason, one might consider weakening the connectivity requirement.
An independent set $Q\subseteq V(D)$ is a \textit{quasikernel} if all vertices in $V(D)$ are reachable from $Q$ via a directed path of length at most $2$. Landau~\cite{Landau_1953} showed that every tournament has a quasikernel that contains a single vertex, often referred to as a king. Later, Chvátal and Lovász~\cite{ChvatalLovasz} proved that every digraph has a quasikernel. The natural question which follows is how small in relation to $|V(D)|$ a quasikernel can be.

\begin{conj}{(Small Quasikernel Conjecture~\cite{katona2011fete})\footnote{The small quasikernel conjecture was made in 1976 by P.L. Erd\H{o}s and L.A. Sz\'ekely (long before it appeared in the literature).}}\label{conj-smallquasi}
    Every source-free digraph $D$ contains a quasikernel with at most $\frac{|V(D)|}{2}$ vertices. 
\end{conj}
Notice that the source-free condition is necessary (e.g., consider an edgeless digraph or a star with $n-1$ sources and $1$ sink).
The small quasikernel conjecture has been well studied, but its validity is still an open question. We refer the reader to the survey paper by P. L. Erd{\H{o}}s et al.~\cite{EGMST} for more historical background and progress on the conjecture. See also the more recent works \cite{ai2024variable,clow2026greedily}. 

In 2024, Spiro~\cite{spiro2026generalized} generalized the definition of a quasikernel by relaxing the connectivity condition. A subset $Q\subseteq V(D)$ is called a \textit{$q$-kernel} if $Q$ is an independent set and all vertices in $V(D)$ are reachable from $Q$ via a directed path of length at most $q$. In particular, quasikernels are $2$-kernels. Spiro posed several asymptotic questions regarding the size of a smallest $q$-kernel analogous to the small quasikernel conjecture. Similar concepts have previously been investigated in graphs. See \cite{davila2015lower,hansberg2007distance,henning2017distance} for bounds on $k$-dominating sets and \cite{bujtas2025revisiting} for results on independent dominating sets for bipartite graphs and trees. One approach to study these questions has been to restrict attention to strongly connected digraphs, which are well behaved and naturally source-free. See Spiro~\cite{spiro2026generalized} Question 7.7 and the paper of Nguyen, Scott, and Seymour~\cite{nguyen2024distant} for progress in this direction.

Note that being source-free is equivalent to having minimum in-degree at least $1$. More generally, vertices with small minimum in-degree seem to be major obstacles for creating small $q$-kernels. This motivated Spiro to define the following constants.

\begin{definition}
	Define $c_{\delta,q}$ to be the smallest constant such that every digraph $D$ with minimum in-degree $\delta$ contains a $q$-kernel of size at most $c_{\delta,q}|V(D)|$. 
\end{definition}

The Small Quasikernel Conjecture (in conjunction with Theorem~\ref{thm-mono}) is equivalent to the statement that $c_{1,2}\leq \frac{1}{2}$. The goal of this paper is to improve upon the known bounds for the constants $c_{\delta,q}$. The previous best bounds are summarized by Spiro (Proposition 7.5 in \cite{spiro2026generalized}):
\begin{prop}[Spiro~\cite{spiro2026generalized}]\label{prop-spiro-cdeltaq}
	\phantom{x}
\begin{enumerate}[(a)]
	\item For all $\delta \geq 1$ and $q \geq 2$, $c_{\delta, q}\geq \frac{1}{\delta+1}$.
	\item If $q \geq 2^{\delta+2}$, then $c_{\delta, q}= \frac{1}{\delta+1}$.
	\item For all $\delta \geq 1$, we have $c_{\delta,2} \geq \frac{1}{2}$.
\end{enumerate}
\end{prop}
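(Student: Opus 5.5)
For part (a) I would take $D$ to be the disjoint union of $m$ copies of the complete (bidirected) digraph on $\delta+1$ vertices. Every vertex has in-degree exactly $\delta$. A $q$-kernel has to reach every component, and there are no arcs between components, so it meets every copy. Independence allows at most one vertex per copy. Hence every $q$-kernel (indeed every $q$-kernel for every $q\ge 1$) has exactly $m=|V(D)|/(\delta+1)$ vertices, and $c_{\delta,q}\ge \frac1{\delta+1}$ follows at once.

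For part (c) I would again use a disjoint union of identical gadgets $G$. The aim is that inside $G$ every quasikernel has at least $|V(G)|/2$ vertices, or at least that the ratio tends to $\frac12$ along a family of gadgets. The plan is to force many pairwise non-adjacent ``obligations'' inside a gadget. I would pick a set $T$ of target vertices whose in-neighbourhoods $N^-(t)$ have size $\delta$ and whose second in-neighbourhoods $N^{-}(N^-(t))$ are arranged so that:
\begin{itemize}
\item a single quasikernel vertex can serve only the target $t$ it belongs to, and
\item the in-neighbourhood of each $t$ is a bidirected clique, so at most one vertex from it can lie in $Q$.
\end{itemize}
The counting is then a charging argument: every target forces a distinct vertex of $Q$ within distance $2$ of it. One then checks that targets together with their forced witnesses make up at least half of the gadget. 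I expect this construction to be the main obstacle. It is easy to make in-degrees large, but hard to keep vertices from dominating each other within two steps. What I would try first is a fixed bidirected core together with a large number of private pendant-like structures of in-degree $\delta$, and then let the number of such structures grow so that the ratio approaches $\frac12$.

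For part (b) I would prove the upper bound $c_{\delta,q}\le\frac1{\delta+1}$ for $q\ge 2^{\delta+2}$; the matching lower bound comes from (a). First choose a maximal family $S$ of vertices whose closed in-neighbourhoods $N^-[v]$ are pairwise disjoint. Since each such set has at least $\delta+1$ vertices, $|S|\le |V(D)|/(\delta+1)$. By maximality, every vertex $u$ has $N^-[u]$ meeting some $N^-[v]$ with $v\in S$, so $u$ lies within bounded distance of a small set related to $S$. Next, turn $S$ into an independent set without increasing its size, using the Chvátal--Lovász style iterative deletion: process vertices in some order, and whenever two chosen vertices are adjacent, discard one and ``reroute'' through it. Each rerouting can at most double the required path length, and I would argue that at most $\delta+2$ rounds of rerouting are needed. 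That is where the threshold $2^{\delta+2}$ would come from.

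I expect the delicate step in (b) to be bounding the number of rerouting rounds by a function of $\delta$ alone. My first attempt would be an induction on $\delta$ that passes to the subdigraph induced by the vertices not yet covered, which has minimum in-degree decreasing by at least one in each round.
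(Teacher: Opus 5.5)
Your part (a) is correct and is essentially Example~\ref{ex-deltaplusone}; a single copy of $\overleftrightarrow{K}_{\delta+1}$ already suffices.

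\textbf{Part (b) has a genuine gap.}

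Maximality of a family $S$ with pairwise disjoint $N^-[v]$ only gives $N^-[u]\cap N^-[v]\neq\emptyset$. The typical way this happens is a common in-neighbour $w\to u$, $w\to v$, which says nothing about $v$ reaching $u$. Concretely, take $W=\overleftrightarrow{K}_{\delta+1}$ and add two sinks $u,v$ with $N^-(u)=N^-(v)=W$. Then $\{v\}$ is a maximal such family, yet $v$ reaches nothing, so $S$ is not $q$-dominating for any $q$. Maximality only makes $N^-[S]$ a $1$-dominating set, and $N^-[S]$ need be neither small nor acyclic.

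The rerouting step is also unsupported. The minimum in-degree of the uncovered part is not controlled: it can stay at $\delta$ or drop to $0$. So nothing yields the bound of $\delta+2$ rounds or the threshold $2^{\delta+2}$.

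The missing idea is to grow \emph{out}-balls and keep the chosen set acyclic, as in Algorithm~\ref{alg:RAB} with $\ell=k=\delta$:
\begin{enumerate}
\item Each selected $v$ has $|N^{\le\delta}_A(v)|\ge\delta$. Charging these newly covered vertices to $v$ gives $|R|\le |V(D)|/(\delta+1)$.
\item Arcs inside $R$ only point backwards in the order of selection, so $R$ is acyclic.
\item Minimum in-degree $\delta$ makes every vertex the end of a path of length $\delta$, and such a path must meet $B$. Hence $N^{\le 2\delta}[R]=V(D)$ (Lemma~\ref{lem:ALG}, Theorem~\ref{thm2delta+1}).
\item Lemma~\ref{lem:prekernel} then extracts an independent $Q\subseteq R$ by one pass along a topological order. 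This costs a single extra step rather than a doubling.
\end{enumerate}
This yields $c_{\delta,q}=\frac{1}{\delta+1}$ for all $q\ge 2\delta+1$ (Corollary~\ref{cor-to-algorithm}), which contains (b).

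\textbf{Part (c) is not a proof.} No gadget is specified, and the charging scheme you outline cannot reach $\frac{1}{2}$ once $\delta\ge 2$.
\begin{itemize}
\item Suppose, as in your first bullet, that no vertex lies within distance $2$ of two different targets. Then the sets $N^-[t]$, $t\in T$, are pairwise disjoint and each has at least $\delta+1$ vertices. So $|T|\le |V(G)|/(\delta+1)$, and the count certifies nothing beyond (a).
\item Your second bullet (at most one vertex of $N^-(t)$ lies in $Q$) constrains $Q$ from above. It contributes nothing to a lower bound.
\end{itemize}
Any valid family must use the independence of $Q$ in an essential way, and must be specific to distance $2$. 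By Theorem~\ref{thm-general q=3}, for $\delta\ge 3$ the same digraphs have $3$-kernels of size at most $|V(D)|/3$. So whatever forces quasikernels to be large has to disappear once paths of length $3$ are allowed.

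Note also that the paper does not prove (c); it quotes it from Spiro. This part still requires an actual construction.
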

\begin{example}\label{ex-deltaplusone}
    Let $\delta\geq 1$ and let $\overleftrightarrow{K}_{\delta+1}$ denote the bidirected complete graph on $\delta+1$ vertices. The minimum in-degree of $\overleftrightarrow{K}_{\delta+1}$ is $\delta$ and the size of a smallest $q$-kernel is 1 for all $q\geq 2$. Thus, $c_{\delta, q}\geq \frac{1}{\delta+1}$ for all $\delta\geq 1$ and $q\geq 2$. 
\end{example}

\subsection{Results}
We now discuss our main results, proved in later sections. First, we show that the constants $c_{\delta, q}$ are monotone in $\delta$.

\begin{restatable*}{theorem}{restatemono}
    \label{thm-mono}
	Let $q\geq 2$, $\delta\geq1$ be fixed integers. Then $c_{\delta+1,q}\leq c_{\delta,q}$.
\end{restatable*}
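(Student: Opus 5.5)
The plan is to reduce to the definition of $c_{\delta,q}$ by turning a digraph of minimum in-degree $\delta+1$ into one of minimum in-degree exactly $\delta$, in a way that keeps small $q$-kernels small. If ``minimum in-degree $\delta$'' were read as ``minimum in-degree at least $\delta$'', the statement would be immediate: every digraph with minimum in-degree at least $\delta+1$ is one of the digraphs over which $c_{\delta,q}$ is defined. So I will work with the stricter reading, where the minimum in-degree is exactly $\delta$, and handle it by a disjoint-union padding argument.

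Let $D$ have minimum in-degree $\delta+1$ and $n$ vertices, write $c=c_{\delta,q}$, and let $k(G)$ denote the minimum size of a $q$-kernel of $G$. For an integer $t\geq 1$, let $D_t$ be the disjoint union of $t$ copies of $D$ together with one copy of $\overleftrightarrow{K}_{\delta+1}$ from Example~\ref{ex-deltaplusone}.

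\begin{enumerate}[(1)]
\item The digraph $D_t$ has minimum in-degree exactly $\delta$. The vertices of the clique have in-degree $\delta$, and all other vertices have in-degree $\delta+1$. By the definition of $c$, $D_t$ therefore has a $q$-kernel $Q$ with $|Q|\le c\,(tn+\delta+1)$.
\item Since $D_t$ has no edges between components, $Q$ restricted to each component is a $q$-kernel of that component. Independence is inherited by subsets. Reachability holds because every directed path of length at most $q$ stays inside one component, so each vertex must be reached from a vertex of $Q$ in its own component.
\item The restriction of $Q$ to the clique is nonempty, so it has size at least $1$. Each of the $t$ copies of $D$ receives at least $k(D)$ vertices of $Q$. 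Hence
\[
t\,k(D)+1\le c\,(tn+\delta+1),
\]
which gives $k(D)\le cn+c(\delta+1)/t$.
\item Letting $t\to\infty$ yields $k(D)\le cn$. Since $D$ was an arbitrary digraph of minimum in-degree $\delta+1$, this shows $c_{\delta+1,q}\le c_{\delta,q}$.
\end{enumerate}

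The only delicate step is step (1). It uses that the bound $|Q|\le c\,|V(D_t)|$ actually holds with $c=c_{\delta,q}$. This is justified by the definition of $c_{\delta,q}$ as the smallest constant for which the bound holds; that smallest constant exists because the set of valid constants is closed under taking the infimum. If one prefers not to rely on this, one can run the argument with $c_{\delta,q}+\varepsilon$ in place of $c$ and let $\varepsilon\to 0$ at the end. Step (2), the decomposition over components, is the key structural point that makes the padding harmless, and it is routine.
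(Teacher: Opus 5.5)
Your proof is correct. It uses the same two ingredients as the paper: a gadget that lowers the minimum in-degree to exactly $\delta$, and disjoint copies that make the gadget's cost negligible. You organize them differently and more directly.

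The paper first uses a sink trick (Lemma~\ref{lem:sinktricklemma}). It attaches one new sink of in-degree $\delta$ to $D$ and notes that a sink cannot be an interior vertex of a path. This only gives $k(D)\le c_{\delta,q}(n+1)\le c_{\delta,q}n+1$, with an additive error. To remove that error, the paper proves Lemma~\ref{lem:asymptotic_equivalence}, which applies the disjoint-copies idea to a near-extremal digraph for $c_{\delta+1,q}$. This yields arbitrarily large examples whose smallest $q$-kernel has size at least $(c_{\delta+1,q}-\varepsilon)n$, and the two bounds are then compared as $n\to\infty$.

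You instead amplify the fixed digraph $D$ itself, and your gadget $\overleftrightarrow{K}_{\delta+1}$ is a separate component. So no argument about the gadget interfering with paths is needed, and its cost $c(\delta+1)$ is spread over $t$ copies. This gives the pointwise bound $k(D)\le c_{\delta,q}|V(D)|$ for every digraph of minimum in-degree at least $\delta+1$, and indeed at least $\delta$. It avoids near-extremal examples and the constant $\tilde c_{\delta,q}$. It also gives the paper's remark $c'_{\delta,q}=c_{\delta,q}$ in one step rather than by iterating the theorem. The paper's route, in exchange, yields the equivalence $c_{\delta,q}=\tilde c_{\delta,q}$ as a by-product.

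Your handling of whether the infimum is attained is fine, as is the $\varepsilon$ fallback. Indeed $c_{\delta,q}=\sup_D k(D)/|V(D)|\ge\frac{1}{\delta+1}>0$, so the infimum is a valid constant.
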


While this result should be intuitive, its justification requires some care, shown in Section \ref{section-monotonicity}. Increasing minimum in-degree increases the connectivity, making it easier for a possible $q$-kernel to reach more vertices. However, increasing the minimum in-degree also makes it harder for such a set to meet the independence condition of $q$-kernels. 
On the other hand, $c_{\delta,q+1}\leq c_{\delta,q}$ trivially holds since every $q$-kernel is also a $(q+1)$-kernel.

Consider the case where $q = 2$. Proposition~\ref{prop-spiro-cdeltaq}~(c) gives a lower bound of $c_{\delta,2}\geq \frac{1}{2}$. The Small Quasikernel Conjecture is that equality holds. Surprisingly, it is not even known if there exists some $\delta$ such that $c_{\delta,2}<1$. Our next result shows the behavior of $c_{\delta,q}$ for any fixed $q\geq 3$ is completely different. If $q\geq 3$, then $c_{\delta,q}\to 0$ as $\delta\to\infty$.

\begin{restatable*}{theorem}{thmGeneralq}
\label{thm-general q=3}
Let $\delta \geq 1$. Every digraph $D$ with minimum in-degree $\delta$ contains a $3$-kernel of size at most $\frac{|V(D)|}{\lfloor\sqrt{\delta+1}\rfloor +1}$. 
In other words, $c_{\delta,3}\leq \frac{1}{\lfloor\sqrt{\delta+1}\rfloor +1}$. This immediately implies $c_{\delta,q}\leq \frac{1}{\lfloor\sqrt{\delta+1}\rfloor +1}$ for all $q\geq3$.
\end{restatable*}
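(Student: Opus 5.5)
The plan is a two-scale greedy construction followed by a Chvátal--Lovász-type fixing step. Throughout, write $k=\lfloor\sqrt{\delta+1}\rfloor$, so that $k^2\le \delta+1$ and every closed in-neighborhood $N^-[v]=N^-(v)\cup\{v\}$ has at least $k^2$ vertices. The aim is to find an independent set $Q$ with a ``private region'' of at least $k+1$ vertices assigned to each $u\in Q$. These regions should be pairwise disjoint and cover a set from which everything is reached within distance $3$. Then $|Q|(k+1)\le |V(D)|$.

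\textbf{Step 1 (coarse packing).} Choose a maximal set $X\subseteq V(D)$ whose closed in-neighborhoods $N^-[x]$, $x\in X$, are pairwise disjoint. By maximality, for every vertex $v$ the set $N^-[v]$ meets some $N^-[x]$. Hence there is a vertex $w$ with $w\in N^-[v]\cap N^-[x]$. So if every vertex of $U=\bigcup_{x\in X}N^-[x]$ is reachable from $Q$ within distance $2$, then every vertex of $D$ is reachable within distance $3$. It therefore suffices to build an independent $Q$ that $2$-reaches all of $U$ and satisfies $|Q|\le |V(D)|/(k+1)$.

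\textbf{Step 2 (fine structure inside each blob).} Fix $x\in X$ and its blob $B_x=N^-[x]$, which has $|B_x|\ge k^2$. We split into two cases according to how the in-neighbors of $B_x$ behave.

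In the first case, some single vertex (possibly outside $B_x$) has out-edges to at least $k$ vertices of $B_x$. Then such a vertex, together with $x$, $2$-reaches all of $B_x$: it reaches $k$ vertices directly, and $x$ is reached from one of them by an edge. In this case we want to charge this vertex to $B_x$.

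In the second case, every vertex has fewer than $k$ out-neighbors in $B_x$. Then $B_x$ contains at least $k$ vertices $w_1,\dots,w_k$, each of in-degree at least $\delta\ge k^2-1$, whose in-neighborhoods are ``spread out''. We would then select up to about $k$ candidate sources among their in-neighbors, and each candidate comes with at least $k$ charged vertices.

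The counting target is that each chosen source receives at least $k+1$ charged vertices drawn from the blobs and their in-neighborhoods, with no vertex charged twice.

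\textbf{Step 3 (restoring independence).} The candidate sources from Step 2 form a set $C$ that $2$-reaches $U$ but may not be independent. To fix this, we apply the Chvátal--Lovász argument to the subdigraph induced on $C$ plus the relevant in-neighborhoods. We take an independent subset $Q\subseteq C$ such that every vertex of $C$ is reached from $Q$ within a bounded distance, with an acyclic ordering used to decide which conflicting candidates are kept. The key point to verify is that whenever a candidate $c$ is discarded in favor of a neighbor $c'\in Q$, the distance budget still works. A discarded vertex was reached from $Q$ in one step, so its targets, originally at distance $\le 2$ from $c$, are now at distance $\le 3$ from $Q$. This means the $3$-kernel slack must be spent here rather than in Step 1. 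Consequently, Step 1 would actually have to be arranged so that $U$ is reached within distance $1$ or $2$ by $C$ in a way compatible with this shift.

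\textbf{Main obstacle.} I expect the hardest part to be making the distance budget and the disjoint charging hold simultaneously. Step 1 uses one unit of distance, Step 3 uses another, and discarding candidates can merge their charged regions. My first attempt would be to reverse the order of the steps: first take a Chvátal--Lovász quasikernel-style independent set inside a suitably chosen auxiliary digraph on $X$, whose arcs record $2$-step reachability between blobs. Then I would verify directly that each retained vertex accounts for at least $k+1$ vertices, using the dichotomy between ``one vertex hits $\ge k$ vertices of a blob'' and ``the blob's in-neighborhoods are spread over $\ge k$ vertices''. Balancing this dichotomy at the threshold $k\approx\sqrt{\delta+1}$ is what produces the bound $\frac{1}{\lfloor\sqrt{\delta+1}\rfloor+1}$.
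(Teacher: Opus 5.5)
Your proposal is a plan rather than a proof, and its load-bearing steps fail as stated.

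\textbf{Step 2 is wrong.} The Step~1 reduction itself is valid. But in Step~2 the blob $B_x=N^-[x]$ consists of \emph{in}-neighbors of $x$. A vertex $y$ with $k$ out-neighbors in $B_x$ reaches those vertices and then $x$. However, $x$ only reaches its own out-neighbors. So the remaining in-neighbors of $x$, which are most of the at least $k^2$ vertices of $B_x$, need not be $2$-reached by anything you have chosen. The second case contains no construction. The disjoint charging of $k+1$ vertices per source is never established.

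\textbf{The distance budget does not close.} As you note yourself, Step~1 spends one unit: you need $U$ reached within $2$ to get $V(D)$ within $3$. The independence repair in Step~3 spends another unit. Even that holds only if the candidate set $C$ is acyclic, which nothing in your construction guarantees; for arbitrary $C$ the Chv\'atal--Lov\'asz repair costs two units. So this architecture gives at best a $4$-kernel. The suggested reversal via an auxiliary digraph on $X$ is not worked out.

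\textbf{The missing idea.} Spend the single unit of slack only on the independence repair. To do this, build directly an acyclic set that $2$-reaches everything (a $2$-prekernel), and charge via \emph{out}-neighborhoods.
\begin{enumerate}
\item Greedily, while some $v$ in the remaining set $A$ has at least $k$ out-neighbors in $A$: put $v$ in $R$, put its out-neighbors in $B$, and delete $N^{\le 1}[v]$ from $A$.
\item This makes $R$ automatically acyclic, since arcs of $D[R]$ point backwards in selection order. It also gives $k|R|\le|B|$, ensures no vertex of $A$ has an in-neighbor in $R$, and leaves $d^+_A(a)\le k-1$ for every $a\in A$.
\item Let $A_{\text{bad}}=\{a\in A:\ a \text{ has no in-neighbor in } B\}$. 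Every vertex of $A_{\text{bad}}$ has all of its at least $\delta$ in-neighbors in $A$. Double counting arcs of $D[A]$ gives $\delta|A_{\text{bad}}|\le\sum_{a\in A}d^-_A(a)=\sum_{a\in A}d^+_A(a)\le(k-1)|A|$.
\item Add a $1$-prekernel $P$ of $D[A_{\text{bad}}]$. Then $R\cup P$ is a $2$-prekernel, because arcs between $R$ and $P$ only go from $P$ to $R$.
\item For $k\ge 2$, the bound $|V(D)|\ge(k+1)|R|+\frac{\delta}{k-1}|A_{\text{bad}}|$ together with $\delta\ge k^2-1$ gives $|R|+|P|\le|V(D)|/(k+1)$. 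For $k=1$, $A_{\text{bad}}=\emptyset$.
\item Lemma~\ref{lem:prekernel} then turns $R\cup P$ into a $3$-kernel that is no larger.
\end{enumerate}

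Your threshold-$k$ dichotomy and the balancing $k^2\approx\delta+1$ are the right ingredients. The dichotomy must be applied to out-degrees inside a shrinking remainder. The ``spread out'' case is then absorbed by this global averaging rather than treated blob by blob.
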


Now, suppose we fix the constant $\delta$. Then the constants $c_{\delta_,q}$ are weakly monotone-decreasing in $q$. That is, $c_{\delta,q+1}\leq c_{\delta,q}$. By Proposition~\ref{prop-spiro-cdeltaq} (a) and (b), there exists some threshold value $q'$ such that for all $q\geq q'$, we have $c_{\delta,q}=\frac{1}{\delta+1}$. In particular, Proposition \ref{prop-spiro-cdeltaq} (b) says we can take $q'=2^{\delta+2}$. We show $q'\approx \frac{3}{2}\delta$ suffices. 

\begin{restatable*}{theorem}{thmGeneraldelta}
    \label{thm-1/(1+delta)}
	Let $\delta \ge 1$. If $D$ is a digraph with minimum in-degree $\delta$, then $D$ has a $\left( \lceil \frac{3}{2}\delta \rceil +1 \right)$-kernel of size at most $\frac{1}{\delta+1}|V(D)|$.
	Hence for any $q \geq \lceil \frac{3}{2}\delta \rceil +1 $, $c_{\delta, q}= \frac{1}{\delta+1}$. 
\end{restatable*}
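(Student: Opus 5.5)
The plan is to build the $q$-kernel greedily, charging each chosen vertex to a private set of at least $\delta+1$ vertices, and to do the independence repair at the end using the Chv\'atal--Lovász style argument. Throughout, set $q=\lceil \frac{3}{2}\delta\rceil+1$, and induct on $|V(D)|$ for a slightly stronger statement in which some vertices are already ``covered''. Concretely, we are given a digraph $D$ and a set $C\subseteq V(D)$ of already-covered vertices. Every vertex outside $C$ has at least $\delta$ in-neighbours in $D$. We want an independent $Q\subseteq V(D)\setminus C$ with $|Q|\le |V(D)\setminus C|/(\delta+1)$ such that every vertex of $V(D)\setminus C$ is reachable from $Q$ by a path of length at most $q$, except for vertices that have an out-neighbour in $Q$. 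Those exceptional vertices are handled at the end by the usual trick: a vertex $y$ with an arc into $x\in Q$ is reached from whatever reaches $x$, or else $x$ is swapped out for $y$. I expect to fine-tune this hypothesis as the argument dictates.

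The key structural step is to find, in the current digraph, a vertex $x$ and a set $S\ni x$ with $|S|\ge\delta+1$ such that $x$ reaches every vertex of $S$ within distance about $\frac{3}{2}\delta$. For this I would take a longest backward path $v_0\leftarrow v_1\leftarrow\cdots\leftarrow v_k$, meaning $v_{i+1}v_i$ is an arc. By maximality, all $\ge\delta$ in-neighbours of $v_k$ lie on the path, so some in-neighbour is $v_j$ with $j\le k-\delta$. This gives a directed cycle $v_k\to v_j\to v_{j-1}\to\cdots\to v_{k}$? That is not quite right as stated, since the arcs go $v_{i+1}\to v_i$. Correctly: $v_j\to v_k\to v_{k-1}\to\cdots\to v_j$ is a cycle of length at least $\delta+1$ through $v_k$.

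On that cycle, every vertex reaches every other. More useful is the chord structure: $v_k$ has $\delta$ in-neighbours among the cycle vertices, and each $v_i$ on the path has $\delta$ in-neighbours that, if taken from the last block, give shortcuts. The plan is to pick $x$ on this cycle and $S$ to be $\delta+1$ consecutive cycle vertices, and to use the chords into $v_k$ to show that $x$ reaches all of $S$ within $\lceil 3\delta/2\rceil$ steps. The rough heuristic is that half the in-neighbours of $v_k$ lie in one half of the window.

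We then delete $S$, mark its out-neighbours as covered, recurse, and add $x$ to $Q$. Vertices whose in-degree drops below $\delta$ are exactly those adjacent from $S$, and these lie within distance one more of $x$. This is where the extra $+1$ in $q$ should be spent.

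The main obstacle is making the distance bound exactly $\lceil\frac{3}{2}\delta\rceil$ rather than something like $2\delta$. This requires choosing $x$ and the window $S$ cleverly: I would first try taking $x=v_j$ where $v_j$ is the in-neighbour of $v_k$ farthest along, together with a pigeonhole argument over the positions of the $\delta$ in-neighbours of $v_k$. The second difficulty is keeping $Q$ independent across recursive steps. Here I would forbid choosing later $x$'s among out-neighbours of earlier $S$'s, which is why those out-neighbours are put in the covered set $C$ rather than deleted. Finally, Example~\ref{ex-deltaplusone} gives the matching lower bound $c_{\delta,q}\ge\frac{1}{\delta+1}$, and monotonicity in $q$ gives the conclusion for all larger $q$.
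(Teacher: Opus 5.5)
There is a genuine gap: your plan never controls the vertices that remain once greedy choices stop paying for themselves, and that is where the whole difficulty lies.

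Your strengthened hypothesis is false as stated. Let $C$ be $\delta$ vertices, each with an arc to one further vertex $u$. The only uncovered vertex has in-degree $\delta$, and the budget $|V(D)\smallsetminus C|/(\delta+1)<1$ forces $Q=\emptyset$. Then $u$ is neither reached from $Q$ nor an in-neighbour of $Q$; disjoint copies show that rounding does not rescue this.

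This is not an artefact; your recursion produces exactly this situation. After deleting $S$ and marking $N^+(S)$ covered, a vertex all of whose in-neighbours lie in $N^+(S)$ stays uncovered. It may have no uncovered vertices near it, and it can sit at distance $\lceil\frac32\delta\rceil+2>q$ from $x$ if $S$ reaches out to distance $\lceil\frac32\delta\rceil$.

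The same phenomenon breaks your structural step in later rounds. Covered vertices can have in-degree below $\delta$ in $D-S$, so a longest backward path need not close into a long cycle, and a window on it may consist mostly of covered vertices that cannot be charged. In the original digraph, by contrast, that step is trivial at distance $\delta$: your $v_j$ reaches $v_k,v_{k-1},\dots,v_{k-\delta+1}$ within $\delta$ steps. So the factor $\frac32$ is not about the reach of a single window.

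Finally, forbidding later $x$'s among out-neighbours of earlier $S$'s only makes $Q$ acyclic, since arcs from a later $x'$ into an earlier, deleted $x$ survive. Your swap repair is the prekernel-to-kernel step of Lemma~\ref{lem:prekernel}, which costs one more unit of distance, and you have already spent the $+1$ on $N^+(S)$.

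The paper supplies exactly the missing control. It runs Algorithm~\ref{alg:RAB} with $\ell=k=\delta$, charging each chosen vertex to at least $\delta$ still-uncovered vertices within distance $\delta$ of it and keeping $R$ merely acyclic. The residual set $A$ then contains no directed path of length $\delta$. Every vertex is the end of such a path, which cannot lie inside $A$, so every residual vertex is within $\delta$ of $B$. This alone gives $2\delta+1$ (Corollary~\ref{cor-to-algorithm}).

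The improvement to $\lceil\frac32\delta\rceil+1$ is Lemma~\ref{Lemma:keyFordelta}. For $v\in K$ at maximum distance $k+1>\lceil\delta/2\rceil$ from $L$, take the $K$-part $u_0u_1\cdots u_k=v$ of a shortest path from $L$, and set $X=N^{\le\delta}_K[\{u_1,\dots,u_k\}]$ and $\hat X=X\cup\{u_0\}$.
\begin{itemize}
\item Since $u_1$ has no in-neighbours in $L$ or $M$, it has at least $\delta-|\hat X|+1$ in-neighbours $u_i'\in K\smallsetminus\hat X$.
\item $X$ and the $K$-vertices of a shortest $L$-to-$u_i'$ path are disjoint and lie in one $\delta$-neighbourhood of size at most $\delta$. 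Hence the $u_i'$ and their $\delta$-neighbourhoods outside $X$ are within $\delta-|X|\le\lfloor\delta/2\rfloor$ of $L$.
\item $u_0$ joins $R$ and is charged to $N^{\le\delta}_K(u_0)$ together with this set $P$, at least $\delta$ vertices in all.
\item $P$ is frozen, which is safe since no arcs leave $P$ into the rest of $K$.
\end{itemize}
Distances end up at most $\delta$ from $R$ to $B$, plus at most $\lceil\delta/2\rceil$ from $R\cup B$ to everything else, and the single $+1$ is spent on Lemma~\ref{lem:prekernel}.

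Your proposal needs an argument of this type and does not contain one: a bound on how far the leftovers are from the covered region, and a way to charge the far ones to nearby vertices.
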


\section{Preliminaries and Organization}\label{sec-prelim}

All of our notation (except perhaps the $d$-neighborhood notation) is standard. A simple digraph $D$ consists of a finite set of vertices $V(D)$ and a set of arcs $E(D)\subseteq (V(D)\times V(D))\smallsetminus \{(u,u): u\in V(D)\}$. All digraphs in this paper are simple digraphs. A vertex $u$ is an \textit{in-neighbor} of $v$ if $(u,v)\in E(D)$. We use $N^-(v)$ to denote the set of in-neighbors of $v$ and let $d^-(v)= |N^-(v)|$ denote the \textit{in-degree} of $v$. Analogously, $u$ is an \textit{out-neighbor} of $v$ if $(v,u)\in E(D)$, $N^+(v)$ denotes the set of out-neighbors of $v$, and $d^+(v)= |N^+(v)|$. For $A\subseteq V(D)$, $N^-_A(v)\coloneqq N^-(v)\cap A$ and $d^-_A(v)\coloneqq |N^-_A(v)|$. The \emph{minimum in-degree} of a digraph $D$ is given by $\min_{v\in V(D)}d^-(v)$. A vertex $v$ is a \textit{source} if it has no in-neighbors, i.e. $d^-(v) = 0$.

A \textit{directed path} of length $\ell$ (from $v_0$ to $v_\ell$) is a sequence of distinct vertices $v_0v_1\dots v_\ell$ such that $(v_i,v_{i+1})\in E(D)$ for $0\leq i\leq \ell-1$. Similarly, a \textit{directed cycle} of length $\ell\geq2$ is a sequence of distinct vertices $v_0v_1\dots v_{\ell-1} v_0$ such that $(v_i,v_{i+1})\in E(D)$ for $0\leq i\leq \ell-1$ with addition modulo $\ell$. We say $D$ is \textit{acyclic} if it contains no directed cycles. A useful fact is that $D$ is acyclic if and only if its vertices have a linear ordering, i.e., an ordering $v_1,\dots,v_n$ such that all arcs $(v_i,v_j)\in E(D)$ satisfy $i<j$.

Let $A\subseteq V(D)$. For $u,v\in A$, we define $\dist_A(u,v)$ to be the smallest integer $\ell\in \{0,1,\dots\}$ such that there exists a directed path of length $\ell$ from $u$ to $v$ with all its vertices in $A$. If no such path exists, we set $\dist_A(u,v)=+\infty$. 
For $X \subset A$ and $v \in A$, we define $\dist_A(X,v) = \min\{\dist_A(u,v): u \in X\}$.
For $d \geq 1$ and $v \in A$, we define the \textit{closed $d$-neighborhood} of $v$ as \[N^{\leq d}_A[v] = \{w\in A : \dist_A(v,w) \leq d\}.\] The \textit{open $d$-neighborhood} is defined as $N^{\leq d}_A(v) = N^{\leq d}_A[v]\smallsetminus \{v\}$. For a subset $S \subseteq A$, define $N_{A}^{\le d}[S] = \bigcup_{v \in S} N_{A}^{\le d}[v]$ and $N_{A}^{\le d}(S) = N_{A}^{\le d}[S]\smallsetminus S$. When $A=V(D)$, we omit the subscript $A$ and just write $\dist(u,v)$, $N^{\leq d}[v]$, and $N^{\le d}[S]$. For $A\subseteq V(D)$, the induced digraph $D[A]$ has vertex set $A$ and arc set $\{(v,u)\in E(D): v,u\in A\}$. We say $A$ is an \textit{independent set} if $D[A]$ contains no arcs. For completeness, we restate the main definitions from the introduction.
\begin{definition}[\cite{spiro2026generalized}]
A set $Q\subseteq V(D)$ is said to be a $q$-\textit{kernel} of $D$ if $Q$ is an independent set and $N^{\le q}[Q] = V(D)$. In the literature, a $1$-kernel is called a \emph{kernel}, and a $2$-kernel is called a \emph{quasikernel}. For positive integers $\delta \geq 1$ and $q\geq 2$, the constant $c_{\delta,q}>0$ is defined to be the smallest constant such that every digraph $D$ with minimum in-degree $\delta$ contains a $q$-kernel of size at most $c_{\delta,q}|V(D)|$. Explicitly, 
    \begin{align*}
    c_{\delta,q}\coloneqq \inf\bigg\{c>0:  &\text{ all digraphs $D$ with minimum in-degree $\delta$}\\
    &\text{contain a $q$-kernel of size at most $c|V(D)|$}\bigg\}. 
\end{align*}
\end{definition}

The rest of the paper is structured as follows. The proofs of Theorem~\ref{thm-mono}, Theorem~\ref{thm-general q=3}, and Theorem~\ref{thm-1/(1+delta)} are given in Sections \ref{section-monotonicity}, \ref{section-general q=3}, and \ref{section-1/(1+delta)} respectively. Section \ref{section-algorithm} introduces an algorithm we use extensively in the proofs of the theorems. We conclude with some open questions in Section \ref{section-openquestions}.

\section{Monotonicity of $c_{\delta,q}$}\label{section-monotonicity}
In this section, we prove the following.
\restatemono

We begin by introducing an asymptotic variation of this problem. Namely, let $\tilde{c}_{\delta,q}$ be the smallest constant for which every \textit{sufficiently large} digraph $D$ with minimum in-degree $\delta$ contains a $q$-kernel $Q$ with $|Q| \leq \tilde{c}_{\delta,q} |V(D)|$. In more detail,
\begin{align*}
    \tilde{c}_{\delta,q}\coloneqq \inf\bigg\{c>0: \exists N=N(c) \text{ such that all digraphs $D$ on at least $N$ vertices}\\
    \text{with min in-degree $\delta$ have a $q$-kernel of size at most $c|V(D)|$}\bigg\}. 
\end{align*}
This variation is sometimes easier to work with and, as the next lemma shows, is equivalent.

\begin{lemma}[asymptotic equivalence]\label{lem:asymptotic_equivalence}
Let $q \geq 2$, $\delta \geq 1$ be fixed integers. For any $\varepsilon > 0$, there exists a sequence of digraphs $\{D_k\}_{k \in \N}$ such that $\{ |V(D_k)| \}_{k \in \N}$ is increasing, the minimum in-degree of $D_k$ is $\delta$, and the size of the smallest $q$-kernel in $D_k$ is at least $(c_{\delta, q} - \varepsilon) |V(D_k)|$. Hence $\tilde{c}_{\delta,q}\geq c_{\delta,q}$. Since $\tilde{c}_{\delta,q}\leq c_{\delta,q} $ holds by definition, we deduce \[c_{\delta,q} = \tilde{c}_{\delta,q}.\]
\end{lemma}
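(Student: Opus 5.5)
The plan is to build large bad digraphs as disjoint unions of copies of one small bad digraph. Fix $\varepsilon>0$. By the definition of $c_{\delta,q}$ as an infimum, the constant $c_{\delta,q}-\varepsilon$ does not have the defining property. So there is a digraph $D$ with minimum in-degree $\delta$ such that every $q$-kernel $Q$ of $D$ satisfies $|Q| > (c_{\delta,q}-\varepsilon)|V(D)|$. Write $n=|V(D)|$ and let $m$ be the size of a smallest $q$-kernel of $D$. Then $m>(c_{\delta,q}-\varepsilon)n$. (Here we use that $c_{\delta,q}>0$, or more precisely we may assume $\varepsilon<c_{\delta,q}$; otherwise the claim is trivial.)

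For $k\in\N$, let $D_k$ be the disjoint union of $k$ copies of $D$. Then $|V(D_k)|=kn$, which is strictly increasing in $k$. Every vertex of $D_k$ keeps exactly its in-neighbours from its own copy, so $D_k$ has minimum in-degree $\delta$.

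The key step is that $q$-kernels of a disjoint union split across the components. Let $Q$ be any $q$-kernel of $D_k$, and let $Q_i$ be its intersection with the $i$-th copy $D^{(i)}$. Since there are no arcs between copies, $Q_i$ is independent in $D^{(i)}$. Every directed path lies inside a single copy, so every vertex of $D^{(i)}$ is reached within distance $q$ from a vertex of $Q_i$, by a path lying in $D^{(i)}$. Hence each $Q_i$ is a $q$-kernel of $D^{(i)}\cong D$, so $|Q_i|\ge m$. Summing over the copies,
\[
|Q|=\sum_{i=1}^k |Q_i| \ge km > (c_{\delta,q}-\varepsilon)\,kn=(c_{\delta,q}-\varepsilon)|V(D_k)|.
\]
This gives the required sequence.

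For the consequence, suppose $c<c_{\delta,q}$. Take $\varepsilon=c_{\delta,q}-c$. The sequence above contains digraphs with arbitrarily many vertices that have no $q$-kernel of size at most $c|V(D_k)|$. So no threshold $N(c)$ exists, and $c$ is not admissible for $\tilde{c}_{\delta,q}$. Therefore $\tilde{c}_{\delta,q}\ge c_{\delta,q}$. Combined with the trivial inequality $\tilde{c}_{\delta,q}\le c_{\delta,q}$, this gives equality.

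There is no real obstacle here. The only points needing care are the strict inequality obtained from the infimum, and noting that independence and reachability both decompose over components. The decomposition works because paths cannot cross between copies.
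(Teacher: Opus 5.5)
Your proof is correct and follows the paper's argument: you take a single witness digraph $D$ for the failure of $c_{\delta,q}-\varepsilon$, form $k$ disjoint copies, and note that any $q$-kernel of the union restricts to a $q$-kernel of each copy. Your strict inequality $|Q|>(c_{\delta,q}-\varepsilon)|V(D)|$ and your explicit derivation of $\tilde{c}_{\delta,q}\ge c_{\delta,q}$, which the paper only asserts with ``Hence'', tidy up the same proof rather than change the route.
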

\begin{proof}
Fix $\varepsilon > 0$. If, for all $n$ and all digraphs $D$ on $n$ vertices with minimum in-degree $\delta$, there exists a $q$-kernel of size less than $(c_{\delta,q} - \varepsilon) n$, then we have a contradiction with the definition of $c_{\delta, q}$. So, there is some $n$ and some digraph $D$ on $n$ vertices with minimum in-degree $\delta$ whose smallest $q$-kernel has size at least $(c_{\delta,q} - \varepsilon)n$. Now, for each $k \in \N$, let $D_{k}$ be the digraph formed by taking $k$ disjoint copies of $D$, say $H_1, \dots, H_k$. Note that the minimum in-degree of $D_k$ remains at $\delta$. Consider a $q$-kernel $Q_k$ in $D_k$. Every vertex in $D_k$ can be reached from $Q_k$ by a directed path of length $q$. But since $H_i$ and $H_j$ are disjoint for all $i \neq j$, it must be that every vertex in $H_i$ is reached from $Q_k \cap H_i$ by a path contained in $H_i$. That is, each of the sets $Q_k \cap H_i$ is a $q$-kernel of $H_i$. Hence, $Q_k \cap H_i$ contains at least $(c_{\delta,q} - \varepsilon) n$ vertices, and so $|Q_k| \geq (c_{\delta,q} - \varepsilon) nk = (c_{\delta,q} - \varepsilon) |V(D_k)|$. The sequence $\{D_k\}_{k\in \N}$ has the desired property.
\end{proof}

\begin{lemma}[sink trick lemma]\label{lem:sinktricklemma}
    Let $q\geq 2$ and $\delta\geq 1$ be positive integers and let $D$ be a digraph on $n$ vertices with minimum in-degree $\delta+1$. Then $D$ contains a $q$-kernel $Q$ with \[
    |Q|\leq \left(c_{\delta,q}+\frac{1}{n}\right)n.
    \]
\end{lemma}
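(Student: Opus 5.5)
The plan is to reduce to the definition of $c_{\delta,q}$ by adding one auxiliary vertex. This vertex lowers the minimum in-degree from $\delta+1$ to exactly $\delta$, and we choose it so that it cannot help or hurt reachability inside $D$. Concretely, let $D$ have minimum in-degree $\delta+1$ and $n$ vertices. We form $D'$ by adding a new vertex $s$ together with arcs $(u,s)$ for exactly $\delta$ chosen vertices $u\in V(D)$. This is possible since $n\geq \delta+2$, because any vertex has $\delta+1$ in-neighbors. We add no arcs out of $s$, so $s$ is a sink. Every old vertex keeps in-degree at least $\delta+1$ and $s$ has in-degree $\delta$, so $D'$ has minimum in-degree exactly $\delta$ and $n+1$ vertices.

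Next we note that the infimum in the definition is attained, in the sense that every digraph with minimum in-degree $\delta$ has a $q$-kernel of size at most $c_{\delta,q}|V|$. For every $c>c_{\delta,q}$ there is a $q$-kernel of size at most $c|V|$, and kernel sizes are integers, so letting $c\downarrow c_{\delta,q}$ gives the claim. We also record that $c_{\delta,q}\le 1$. By Chv\'atal--Lov\'asz every digraph has a quasikernel, which is in particular a $q$-kernel of size at most $|V|$. Applying this to $D'$ yields a $q$-kernel $Q'$ of $D'$ with $|Q'|\le c_{\delta,q}(n+1)\le c_{\delta,q}n+1$.

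The key observation is that $s$, being a sink, is never an interior vertex of a directed path. Hence for $u,v\in V(D)$ we have $\dist_{D'}(u,v)=\dist_D(u,v)$, and $s$ reaches no vertex other than itself. We then split into two cases.
\begin{itemize}
\item If $s\notin Q'$, then $Q'\subseteq V(D)$ is independent in $D$ and reaches every vertex of $D$ within distance $q$ by paths avoiding $s$. So $Q=Q'$ is a $q$-kernel of $D$ with $|Q|\le c_{\delta,q}n+1$.
\item If $s\in Q'$, set $Q=Q'\smallsetminus\{s\}$. Every vertex of $D$ is reached from $Q'$, and not from $s$, so it is reached from $Q$ within $D$. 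Moreover $Q$ is independent, and $|Q|\le c_{\delta,q}(n+1)-1\le c_{\delta,q}n$.
\end{itemize}
In both cases $|Q|\le (c_{\delta,q}+\frac1n)n$.

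There is no real obstacle here. The points needing care are that the definition requires minimum in-degree \emph{exactly} $\delta$, which is why we add $s$ rather than delete arcs, and that the infimum is attained. The integrality argument above handles the latter.
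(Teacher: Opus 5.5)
Your proof is correct and follows the paper's argument. You add a sink of in-degree exactly $\delta$, apply the definition of $c_{\delta,q}$ to the enlarged digraph, and delete the sink from the resulting $q$-kernel; a sink cannot lie on any path from the kernel to a vertex of $D$. Your case split is unnecessary, since the paper simply takes $Q'\smallsetminus\{u\}$ and uses $|Q'\smallsetminus\{u\}|\le|Q'|\le c_{\delta,q}n+1$, but your explicit checks that the infimum is attained and that $c_{\delta,q}\le 1$ make precise two points the paper uses without comment.
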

\begin{proof}
Let $D$ be a digraph on $n$ vertices with minimum in-degree $\delta + 1$. Add to $D$ a sink $u$ with in-degree exactly $\delta$, and let $D'$ denote the resulting digraph (the in-neighbors of $u$ can be chosen arbitrarily). Then $D'$ has minimum in-degree $\delta$ and so contains a $q$-kernel $Q'$ with \[
   |Q'|\leq c_{\delta,q} (n + 1) = c_{\delta,q} n + c_{\delta,q} \leq c_{\delta,q} n + 1 = \left(c_{\delta,q} + \frac{1}{n}\right)n.
    \] We claim that $Q'\smallsetminus\{u\}$ is a $q$-kernel for $D$. Clearly $Q'\smallsetminus\{u\}$ is independent in $D$. Let $v\in V(D)\smallsetminus (Q'\smallsetminus\{u\})= V(D)\smallsetminus Q'$. Then there exists a directed path in $D'$ from $Q'$ to $v$ of length at most $q$. Since $u$ is a sink, $u$ is not on this path. Consequently, there is a directed path in $D$ of length at most $q$ from $Q'\smallsetminus\{u\}$ to $v$. Hence $Q' \smallsetminus \{u\}$ is a $q$-kernel for $D$ of size at most $\left(c_{\delta,q} + \frac{1}{n}\right)n$. 
\end{proof}
We now prove Theorem~\ref{thm-mono}. 
\begin{proof}
    Let $q \ge 2$ and $\delta \ge 1$. Let $N \ge 1$ be arbitrarily large, and $\varepsilon > 0$ arbitrarily small. By \Cref{lem:asymptotic_equivalence}, there exist $n \ge N$ and a digraph $D$ on $n$ vertices with minimum in-degree $\delta+1$ whose smallest $q$-kernel has size at least $(c_{\delta+1,q}-\varepsilon)n$. By \Cref{lem:sinktricklemma}, $D$ has a $q$-kernel of size at most $(c_{\delta,q}+\frac{1}{n})n$. Therefore,
\[
    (c_{\delta+1,q}-\varepsilon)n \le \left(c_{\delta,q}+\frac{1}{n}\right)n \implies c_{\delta+1,q}-\varepsilon \le c_{\delta,q}+\frac{1}{N}.
\]
Since $\frac1N$ and $\varepsilon$ are arbitrarily small, we obtain $c_{\delta+1,q} \le c_{\delta,q}$.
\end{proof}

\begin{remark}
The following definition is also quite natural. Define
\begin{align*}
    c_{\delta,q}'\coloneqq \inf\bigg\{c>0:  &\text{ all digraphs $D$ with minimum in-degree at least $\delta$}\\
    &\text{contain a $q$-kernel of size at most $c|V(D)|$}\bigg\}. 
\end{align*}
Note that $c'_{\delta,q}$ only differs from $c_{\delta,q}$ by the inclusion of ``at least'' in the definition. Clearly $c_{\delta,q}\leq c'_{\delta,q}$. Theorem~\ref{thm-mono}  implies $c'_{\delta,q} \leq c_{\delta,q}$ for all $q\geq 2, \delta\geq 1$. So in fact, $c_{\delta,q} = c'_{\delta,q} = \tilde{c}_{\delta,q}$ for all $q\geq 2, \delta\geq 1$. 
\end{remark}

\section{The modified first-phase Chvátal--Lovász algorithm}\label{section-algorithm}

Instead of constructing $q$-kernels directly, our proofs often focus on constructing the following object which tends to be easier to work with. 

\begin{definition}
	A set $R\subseteq V(D)$ is said to be a $q$-\textit{prekernel} if $D[R]$ is acyclic and $N^{\le q}[R] = V(D)$.
\end{definition}
This is almost the definition of a $q$-kernel except that we only require the set to be acyclic instead of independent. A $q$-prekernel naturally gives rise to a $(q+1)$-kernel as we show in the following lemma.

\begin{lemma}\label{lem:prekernel}
	If a digraph $D$ contains a $q$-prekernel $R$, then it contains a $(q + 1)$-kernel $Q$ with $Q\subseteq R$. Furthermore, if $R$ induces an arc, then $Q$ is a proper subset of $R$ and $|Q| < |R|$.
\end{lemma}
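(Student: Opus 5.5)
Since $D[R]$ is acyclic, we fix a linear ordering $v_1,\dots,v_m$ of $R$ in which every arc of $D[R]$ goes from a smaller to a larger index. We then build $Q$ greedily in this order, as in the first phase of Chvátal--Lovász. Process $v_1,v_2,\dots,v_m$ in turn, and add $v_i$ to $Q$ exactly when $v_i$ has no in-neighbor already in $Q$. Because arcs of $D[R]$ only go forward, an out-neighbor of $v_i$ inside $R$ has a larger index. So whenever $v_i$ is added, none of its in-neighbors in $R$ lie in $Q$. Moreover, any later vertex $v_j$ with $(v_i,v_j)$ an arc will see $v_i\in Q$ and be rejected. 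Hence $Q$ is independent, and clearly $Q\subseteq R$.

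For the covering property, we show that every $v\in R$ satisfies $\dist(Q,v)\le 1$. If $v\in Q$ this is immediate. If $v\notin Q$, then at the moment $v$ was processed it had an in-neighbor in $Q$, so $\dist(Q,v)=1$. Now take any $w\in V(D)$. Since $N^{\le q}[R]=V(D)$, there is some $v\in R$ with a directed path $P$ of length at most $q$ from $v$ to $w$. There is also some $u\in Q$ with either $u=v$ or $(u,v)$ an arc. Concatenating gives a walk from $u$ to $w$ of length at most $q+1$. A walk contains a directed path between its endpoints of no greater length, and the case where $u$ already lies on $P$ only shortens it. Therefore $\dist(u,w)\le q+1$ and $N^{\le q+1}[Q]=V(D)$, so $Q$ is a $(q+1)$-kernel.

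For the second statement, suppose $R$ induces an arc $(v_i,v_j)$ with $i<j$. If $v_i\in Q$, then $v_j$ has an in-neighbor in $Q$ when it is processed, so $v_j\notin Q$. If $v_i\notin Q$, then $v_i$ itself is a vertex of $R$ missing from $Q$. Either way $Q\subsetneq R$, so $|Q|<|R|$. The argument is routine throughout; the only points needing care are that independence relies on the ordering being a topological order, and that the walk-to-path reduction does not increase length.
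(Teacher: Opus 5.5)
Your proof is correct and is essentially the paper's argument: keeping $v_i$ exactly when none of its in-neighbors has already been chosen, scanning a topological order of $R$, is the same procedure as the paper's ``take the least-index vertex remaining in $A$ and delete its closed out-neighborhood.'' Independence, the fact that every vertex of $R$ is within distance $1$ of $Q$, and the strictness claim are argued the same way; the only cosmetic difference is that the paper picks a shortest path from $R$ to $w$ so that the in-neighbor in $Q$ cannot lie on it, whereas you shortcut the concatenated walk, and both work.
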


\begin{proof}
    Let $R$ be a $q$-prekernel of $D$. Since $R$ is acyclic, there exists an ordering $v_1,v_2,\dots,v_{|R|}$ of the vertices of $R$ such that all arcs $(v_i,v_j)$ induced by $R$ satisfy $i< j$.  Initialize $Q=\emptyset$ and $A=R$. While $A$ is nonempty, let $i=\min\{j:v_j\in A\}$. Add $v_i$ to $Q$ and remove $N^{\leq 1}[v_i]$ from $A$. Iterate this procedure until $A$ is empty. We claim that the final set $Q$ is independent. Suppose not. Then there is some arc $(v_i,v_j)$ in $Q$ with $i < j$. By the construction of $Q$, $v_i$ was added before $v_j$. But when $v_i$ was added to $Q$, $v_j\in N^{\leq 1}[v_i]$ was removed from $A$ and could not have been chosen in any later step, a contradiction. For $u\in V(D)$, there exists a (shortest) directed path of length at most $q$ from some $v_i\in R$ to $u$. By construction, either $v_i\in Q$ or $v_i$ has an in-neighbor in $Q$. In either case, we obtain a directed path of length at most $q+1$ from $Q$ to $u$ (the in-neighbor cannot lie on the path from $v_i$ to $u$ since we chose the shortest path). Hence, $Q$ is a $(q+1)$-kernel. Notice that if $R$ contains at least one arc, then $|Q|<|R|$ due to the removal of $N^{\leq 1}[v_j]$ for each $v_j\in Q$.
\end{proof}

\begin{remark}\label{remark-1pre}
    The same technique can be used to show every digraph $D$ contains a $1$-prekernel. Order the vertices of $V(D)$ arbitrarily. Initialize $R=\emptyset$ and $A=V(D)$. While $A$ is nonempty, let $v_i\in A$ with $i=\min\{j:v_j\in A\}$. Add $v_i$ to $R$ and remove $N^{\leq 1}[v_i]$ from $A$. Iterate this procedure until $A$ is empty. The resulting set $R$ is a 1-prekernel for $D$. This is exactly the first phase of what the authors of \cite{EGMST} refer to as the Chvátal--Lovász algorithm (the algorithm is a reformulation of the original induction proof of Chvátal and Lovász~\cite{ChvatalLovasz} that every digraph has a quasikernel).
\end{remark}

Algorithm~\ref{alg:RAB} below is similar to the first phase of the Chvátal--Lovász algorithm with the major difference being the added flexibility of parameters $\ell$ and $k$. It is important to note that the algorithm does not directly output $q$-kernels or $q$-prekernels. The algorithm  outputs a partition $V(D)=R\sqcup A\sqcup B$ with several nice properties described by the following lemma. In particular, the set $R$ will be acyclic and will serve as our starting point for constructing small $q$-prekernels.

\begin{algorithm}
\caption{An algorithm for generating sets $R,A,B$}\label{alg:RAB}
\begin{algorithmic}
\Require $\ell \geq 1$, $k \geq 1$, digraph $D$
\State $R \gets \emptyset$ \Comment{$R$ is the starting point for our $q$-prekernel.}
\State $B \gets \emptyset$ \Comment{$B$ is the set of vertices ``covered'' by $R$.}
\State $A \gets V(D)$ \Comment{$A$ is the set of vertices not ``covered'' by $R$.}
\While{$\exists v \in A$ such that $N_{A}^{\leq \ell} (v) \geq k$}
\State $R \gets R \cup \{v\}$                         
\State $B \gets B \cup (N^{\leq \ell} (v)\smallsetminus R) $  \Comment{We add at least $k$ vertices to $B$.}
\State $A \gets A \smallsetminus N^{\leq \ell} [v]$  \Comment{We delete at least $k+1$ vertices from $A$.}
\EndWhile
\end{algorithmic}
\end{algorithm}

\begin{figure}[h]
    \centering
    \includegraphics[width=0.6\linewidth]{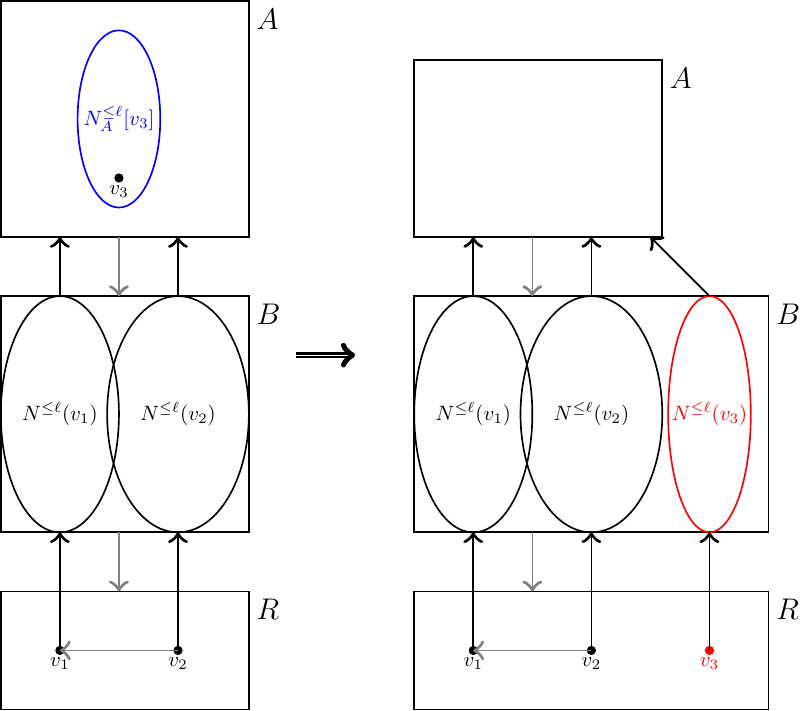}
    \caption{Algorithm 1 moving $v_3$ to $R$ and moving $N_A^{\leq \ell}(v_3)$ to $B$.}
    \label{fig:alg1}
\end{figure}

\begin{lemma}\label{lem:ALG}
    Let $D$ be a digraph and $\ell\geq 1$ and $k\geq 1$ be positive integers. Then the output sets $R,A,B$ of Algorithm~\ref{alg:RAB} satisfy the following.
    \begin{enumerate}[(1)]
        \item $R\sqcup A\sqcup B$ is a partition of $V(D)$.
        \item $R$ is acyclic.
        \item $k |R| \leq |B|$.
        \item For every $a \in A$, $|N_{A}^{\leq \ell}(a)| < k$.
        \item For every $b \in B$, $\dist(R,b) \leq \ell$.
        \item Suppose $\ell\geq k$ and that every vertex in $V(D)$ is an endpoint of a directed path of length $k$. Then for every $a\in A$, we have $\dist(B,a) \leq k$ and $\dist(R,a) \leq \ell+k$.
    \end{enumerate}
\end{lemma}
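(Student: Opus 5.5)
The plan is to verify the six properties one at a time. Items (1)–(5) follow from simple loop invariants of Algorithm~\ref{alg:RAB}; item (6) needs a short path argument combined with (4).

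For (1), I would show by induction over the iterations that $R$, $A$, $B$ always partition $V(D)$. Initially this is clear. In an iteration with chosen $v\in A$:
\begin{itemize}
\item $v$ moves from $A$ to $R$;
\item every vertex of $N^{\le \ell}(v)\smallsetminus R$ is put into $B$ (if it was in $A$, it is deleted from $A$ in the same step);
\item the removed set $N^{\le\ell}[v]$ is exactly what leaves $A$.
\end{itemize}
Hence the three sets stay disjoint and cover $V(D)$. Since $A$ only shrinks, a vertex that has left $A$ never returns, and only vertices of $A$ are ever added to $R$. This monotonicity is used repeatedly below.

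For (2), order $R$ by the time of insertion. Suppose $v$ was added before $w$ and $(v,w)\in E(D)$. Then $w\in N^{\le 1}(v)\subseteq N^{\le \ell}(v)$ because $\ell\ge 1$, so $w$ was deleted from $A$ when $v$ was added. Then $w$ could never be chosen later, a contradiction. So every arc inside $R$ goes from a later vertex to an earlier one, and the reverse insertion order is a linear ordering; thus $R$ is acyclic.

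For (3), in each iteration the vertices of $N^{\le \ell}_A(v)$ lie in the current $A$, so they are not yet in $B$ or $R$. They are all added to $B$, and by the loop condition there are at least $k$ of them. Summing over the $|R|$ iterations gives $k|R|\le |B|$.

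Item (4) is exactly the negation of the while-condition at termination. For (5), each $b\in B$ entered $B$ as an element of $N^{\le\ell}(v)$ for some $v\in R$, so $\dist(R,b)\le \ell$.

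The main step is (6). Fix $a\in A$ and a directed path $x_0x_1\dots x_k$ with $x_k=a$.
\begin{itemize}
\item First, not all $x_i$ lie in $A$. Otherwise $x_1,\dots,x_k$ are $k$ distinct vertices, each reachable from $x_0$ inside $A$ within distance $k\le \ell$. Then $|N^{\le\ell}_A(x_0)|\ge k$, contradicting (4).
\item So let $i<k$ be the largest index with $x_i\notin A$; then $x_{i+1},\dots,x_k\in A$.
\item Next, $x_i\notin R$. If $x_i\in R$, then when $x_i$ was chosen, its out-neighbor $x_{i+1}$ lay in $N^{\le 1}[x_i]$. So $x_{i+1}$ was removed from $A$ then, or had already left $A$. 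Either way $x_{i+1}$ is not in the final $A$, a contradiction.
\item By (1), $x_i\in B$, and the subpath $x_i\dots x_k$ shows $\dist(B,a)\le k-i\le k$.
\end{itemize}
Finally, by (5) there is a path of length at most $\ell$ from $R$ to $x_i$. Concatenating it with $x_i\dots x_k$ gives a walk of length at most $\ell+k$ from $R$ to $a$. Any walk contains a path between its endpoints of no greater length, so $\dist(R,a)\le \ell+k$. I expect the only delicate points to be these: using $\ell\ge k$ so the path vertices count toward $N^{\le\ell}_A(x_0)$, and checking that $A$ only shrinks so that $x_i$ cannot lie in $R$.
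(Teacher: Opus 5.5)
Your proposal is correct and follows essentially the same approach as the paper: you verify (1)--(5) as loop invariants of Algorithm~\ref{alg:RAB}, and you prove (6) by taking a length-$k$ path ending at $a$ and combining it with property (4). The only small difference is in (6): you take the last vertex $x_i\notin A$ on the path and rule out $x_i\in R$ using its out-neighbor $x_{i+1}\in A$, whereas the paper rules out every path vertex lying in $R$ because $a$ itself would be within distance $k\le\ell$ of it; both arguments work, and yours uses $\ell\ge k$ only once and is a bit more careful about walks versus paths.
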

\begin{proof}
\phantom{for spacing...}
    \begin{enumerate}[(1)]
        \item  The algorithm is initialized with $R\sqcup A\sqcup B$ as a partition of $V(D)$ and each iteration of the while loop maintains this property.
        \item Label the vertices of $R$ as $v_1,\dots,v_{|R|}$ according to the order they were added to $R$. Suppose $(v_i,v_j)$ is an arc between vertices of $R$. We cannot have $i<j$ since $v_j\in N^{\leq \ell}(v_i)$ is removed from $A$ when $v_i$ is added to $R$, a contradiction. So all arcs $(v_i,v_j)$ in $R$ have $i>j$. Since $R$ has a linear ordering, $R$ is acyclic. 
        \item For every new vertex added to $R$, there are at least $k$ new vertices added to $B$.
        \item The algorithm would not have terminated if there exists $a\in A$ with $|N_A^{\leq \ell}(a)|\geq k$.
        \item Every $b\in B$ belongs to $N^{\leq\ell}(v)$ for some $v\in R$.
        \item Let $a\in A$. By assumption, there exists a directed path $v_0v_1\dots v_{k}=a$ in $D$. No vertices on the path can be in $R$; if say $v_i\in R$, then $a\in N^{\leq \ell}(v_i)$ (since $\ell\geq k$), contradicting that $a\in A$. If the path was completely contained in $A$, then $|N^{\leq \ell}_A(v_0)|\geq k$, contradicting (4). Therefore, some vertex on the path belongs to $B$, so $\dist(B,a)\leq k$. This combined with (5) gives $\dist(R,a)\leq \ell+k$.
        \qedhere
    \end{enumerate}
\end{proof}

\begin{theorem}\label{thm2delta+1}
If $D$ is a digraph with the property that every vertex is the endpoint of some directed path of length $d\geq1$, then $D$ has a $2d$-prekernel of size at most $\frac{1}{d+1}|V(D)|$.
\end{theorem}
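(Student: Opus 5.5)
The plan is to run Algorithm~\ref{alg:RAB} on $D$ with parameters $\ell = k = d$ and to show that the output set $R$ is itself the desired $2d$-prekernel. Since $\ell = k = d \ge 1$, Lemma~\ref{lem:ALG} applies. Its hypothesis in part~(6) is exactly our assumption that every vertex of $D$ is the endpoint of a directed path of length $d = k$, together with $\ell \geq k$, which holds trivially.

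First, check the prekernel conditions. By Lemma~\ref{lem:ALG}(2), $D[R]$ is acyclic. By part~(1), every vertex of $D$ lies in exactly one of $R$, $A$, $B$, so it suffices to check that each part lies in $N^{\le 2d}[R]$:
\begin{itemize}
\item vertices of $R$ are at distance $0$ from $R$;
\item vertices of $B$ are at distance at most $\ell = d$ from $R$ by part~(5);
\item vertices of $A$ are at distance at most $\ell + k = 2d$ from $R$ by part~(6).
\end{itemize}
Hence $N^{\le 2d}[R] = V(D)$, and $R$ is a $2d$-prekernel.

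Second, bound the size. Part~(3) gives $d|R| \le |B|$. Since $R$ and $B$ are disjoint subsets of $V(D)$,
\[
(d+1)|R| \le |R| + |B| \le |V(D)|,
\]
so $|R| \le \frac{1}{d+1}|V(D)|$, as required.

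I do not expect a genuine obstacle here, since all the work was done in Lemma~\ref{lem:ALG}. The only points needing care are choosing $\ell$ and $k$ so that part~(6) applies (which requires $\ell \ge k$ and $k = d$) and so that $\ell + k$ equals the target radius $2d$. The choice $\ell = k = d$ meets both simultaneously.
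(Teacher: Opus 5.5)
Your proposal is correct and is essentially the paper's proof: run Algorithm~\ref{alg:RAB} with $\ell=k=d$, get acyclicity and $N^{\le 2d}[R]=V(D)$ from parts (1), (2), (5), (6) of Lemma~\ref{lem:ALG}, and get $(d+1)|R|\le |R|+|B|\le |V(D)|$ from part (3). The paper also checks explicitly that $R\neq\emptyset$, but your argument does not need this separately, since parts (5) and (6) already place every vertex within distance $2d$ of $R$.
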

\begin{proof}
Use Algorithm~\ref{alg:RAB} with parameters $\ell = k = d$ to obtain sets $R,A$, and $B$.  
By assumption, some vertex $v$ is the start of a path of length $d$. So, $|N^{\le d}(v)| \ge d$, and $R \neq \emptyset$. By Lemma~\ref{lem:ALG}, $R$ is acyclic and every vertex in $A \cup B$ is at distance at most $d+d$ from $R$. 
Therefore, we deduce that $R$ is a $2d$-prekernel. Again by  Lemma~\ref{lem:ALG}, $d|R|\leq |B|$. Hence, $|R|\leq \frac{|B|+|R|}{d+1}\leq \frac{|V(D)|}{d+1}$.
\end{proof}

\begin{cor}\label{cor-to-algorithm}
For $\delta \ge 1$, every digraph $D$ with minimum in-degree $\delta$ contains a $(2\delta+1)$-kernel of size at most $\frac{|V(D)|}{\delta+1}$.
\end{cor}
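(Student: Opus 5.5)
The corollary follows by combining Theorem~\ref{thm2delta+1} with Lemma~\ref{lem:prekernel}, taking $d=\delta$.

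First, verify the hypothesis of Theorem~\ref{thm2delta+1} with $d=\delta$: every vertex of $D$ must be the endpoint of a directed path of length $\delta$. Fix $v\in V(D)$ and build the path backwards greedily. Set $v_\delta=v$. Suppose we already have distinct vertices $v_j,v_{j+1},\dots,v_\delta$ with $(v_i,v_{i+1})\in E(D)$ for $j\le i<\delta$, and $j\ge 1$. This partial path has $\delta-j+1\le\delta$ vertices, and $v_j$ itself is not an in-neighbor of $v_j$ since there are no loops. So at most $\delta-j$ of the $\delta$ in-neighbors of $v_j$ lie on the path, and at least one in-neighbor $v_{j-1}$ lies off it. Prepend $v_{j-1}$ and continue until $j=0$. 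The result is a directed path $v_0v_1\dots v_\delta=v$ of length $\delta$. This counting step is the only point needing care: the minimum in-degree $\delta$ is exactly enough to extend the path $\delta$ times.

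Next, Theorem~\ref{thm2delta+1} gives a $2\delta$-prekernel $R$ with $|R|\le \frac{|V(D)|}{\delta+1}$.

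Finally, Lemma~\ref{lem:prekernel} turns $R$ into a $(2\delta+1)$-kernel $Q\subseteq R$. Hence $|Q|\le|R|\le\frac{|V(D)|}{\delta+1}$, as required.

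No step is a genuine obstacle; the only verification beyond citing earlier results is the path-extension count above.
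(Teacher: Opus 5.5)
Your proposal is correct and follows the paper's proof exactly: show that every vertex ends a directed path of length $\delta$ by a greedy backward construction, apply Theorem~\ref{thm2delta+1} with $d=\delta$, then convert the prekernel with Lemma~\ref{lem:prekernel}. Your explicit in-neighbor count for the path extension supplies the detail the paper leaves as ``greedily construct such a path in reverse.''
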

\begin{proof}
Since $D$ has minimum in-degree $\delta$, every vertex is an endpoint of some directed path of length $\delta$. To see this, just greedily construct such a path in reverse starting with the endpoint. By Theorem~\ref{thm2delta+1}, $D$ has a $2\delta$-prekernel $R$ of size at most $\frac{|V(D)|}{\delta+1}$ which can be made into a $(2\delta+1)$-kernel $Q$ with $|Q|\leq |R|$  by   \Cref{lem:prekernel}. 
\end{proof}

\section{Proof of Theorem~\ref{thm-general q=3} }\label{section-general q=3}

We begin with a warm-up proposition when $\delta =1$. We remark that Spiro's Proposition~2.1 in \cite{spiro2026generalized} is a strengthening of the following as it yields two disjoint $3$-kernels. 
\begin{prop}\label{prop-c13}
    Every digraph $D$ with minimum in-degree $1$ contains a $3$-kernel of size at most $\frac{1}{2}|V(D)|$. This is sharp due to the 2-cycle (or disjoint union of 2-cycles), so $c_{1,3}=\frac{1}{2}$.
\end{prop}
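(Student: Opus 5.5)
The plan is to obtain a $2$-prekernel of size at most $\frac12|V(D)|$ and then apply Lemma~\ref{lem:prekernel}, which converts a $2$-prekernel $R$ into a $3$-kernel $Q\subseteq R$.

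First we produce the prekernel. Since $D$ has minimum in-degree $1$, every vertex is the endpoint of a directed path of length $1$; to see this, take any in-neighbor. So the hypothesis of Theorem~\ref{thm2delta+1} holds with $d=1$. That theorem gives a $2$-prekernel $R$ with $|R|\le \frac{1}{2}|V(D)|$. Concretely, this is Algorithm~\ref{alg:RAB} run with $\ell=k=1$, together with parts (2), (3), (5) and (6) of Lemma~\ref{lem:ALG}:
\begin{itemize}
\item[(2)] $R$ is acyclic;
\item[(3)] $|R|\le|B|$;
\item[(5), (6)] every vertex lies within distance $2$ of $R$.
\end{itemize}

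Next, Lemma~\ref{lem:prekernel} yields an independent $Q\subseteq R$ with $N^{\le 3}[Q]=V(D)$. Since $|Q|\le|R|\le\frac12|V(D)|$, this is the desired $3$-kernel. This is precisely the case $\delta=1$ of Corollary~\ref{cor-to-algorithm}, because $2\delta+1=3$.

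For sharpness, take the directed $2$-cycle, or a disjoint union of $t$ such cycles. It has minimum in-degree $1$. Any $3$-kernel must contain a vertex from each component, since no vertex is reachable from outside its own component. Hence every $3$-kernel has size at least $t=\frac12|V(D)|$, which gives $c_{1,3}\ge\frac12$. This lower bound also follows from Proposition~\ref{prop-spiro-cdeltaq}(a), and combined with the upper bound it gives $c_{1,3}=\frac12$.

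There is no real obstacle here. The only point to check is that the hypothesis $\ell\ge k$ in Lemma~\ref{lem:ALG}(6) holds, which it does since $\ell=k=1$.
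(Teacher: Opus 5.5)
Your proposal is correct and follows essentially the same route as the paper. You run Algorithm~\ref{alg:RAB} with $\ell=k=1$, use Lemma~\ref{lem:ALG} parts (2), (3), (5), (6) to get a $2$-prekernel of size at most $\frac12|V(D)|$, and convert it with Lemma~\ref{lem:prekernel}; your sharpness argument via disjoint $2$-cycles matches the paper's remark.
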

\begin{proof}
    Set $\ell=k=1$, and use Algorithm~\ref{alg:RAB} to obtain a partition $R\sqcup A\sqcup B$ of $V(D)$. By Lemma~\ref{lem:prekernel}, it suffices to show $R$ is a $2$-prekernel with $|R|\leq \frac{1}{2}|V(D)|$. By Lemma~\ref{lem:ALG}, $R$ is acyclic and $|R|\leq |B|\implies |R|\leq \frac{1}{2}|V(D)|$. By Lemma~\ref{lem:ALG} parts (5) and (6), $\dist(R,b)\leq 1$ for all $b\in B$ and $\dist(R,a)\leq 2$ for all $a\in A$.
\end{proof}

For general $\delta$, have the following.
\thmGeneralq
The main consequence is that $c_{\delta,3}\to 0$ as $\delta\to\infty$. 
In contrast, $c_{\delta,2}\geq \frac{1}{2}$ for all $\delta$. We suspect the bound in the theorem is not tight (see Conjecture~\ref{ourConj} in Section~\ref{section-openquestions}).

\begin{proof}
Let $D$ be a digraph with minimum in-degree $\delta\geq1$. We begin by applying Algorithm~\ref{alg:RAB} to $D$ with $\ell = 1$ and $k = \lfloor \sqrt{\delta+1}\rfloor$. Recall this means we iteratively add a vertex $v$ to $R$ if $N_A^{\leq 1}(v)$ contributes at least $k$ new vertices to the set $B$. By the handshaking lemma, there exists a vertex with out-degree at least $\delta\geq k$, so $R$ is nonempty. Define \[
 A_{\text{bad}}=\{a\in A: \dist(B,a)>1\}.
 \]
 If $\delta\in \{1,2\}$, then $k = 1$ and so the set $A_{\text{bad}}$ is empty. Then, as in the proof of Proposition~\ref{prop-c13},  $R$ is a 2-prekernel with $|R|\leq \frac{1}{2}|V(D)|$ and we are done. Otherwise let $P\subseteq A_{bad}$ be a 1-prekernel for $D[A_{\text{bad}}]$ (one always exists; see \cref{remark-1pre}). Observe that $R\cup P$ is a $2$-prekernel for $D$. We have that $R\cup P$ is acyclic since all arcs between $R$ and $P$ must be from $P$ to $R$. We also have $N^{\leq 2}[R\cup P]=V(D)$. Lemma~\ref{lem:prekernel} gives us a $3$-kernel of size at most $|R\cup P|=|R|+|P|$, so it suffices to show $|R|+|P|\leq \frac{|V(D)|}{k+1}$. For $v\in A_{\text{bad}}$, we have $d^{-}_A(v)\geq \delta$. We also know $d^{+}_A(v)\leq k-1$ for every $v\in A$ (otherwise Algorithm \ref{alg:RAB} would have added $v$ to $R$).  Putting these together with the handshaking lemma yields\begin{equation}\label{inoutdeg}
\delta |A_{\text{bad}}| \leq \sum_{v\in A}d^{-}_A(v) = \sum_{v\in A}d^{+}_A(v)\leq (k-1)|A| \implies  \frac{\delta}{k-1}|A_{\text{bad}}|\leq |A|. 
\end{equation}

We have that \begin{align*}
	|V(D)| &= |R| + |B| + |A|\\
	&\geq |R| + k|R| + |A|\quad\text{by Lemma~\ref{lem:ALG}}\\
	&\geq  |R| + k|R| + \frac{\delta}{k-1}|A_{\text{bad}}|\quad\text{by Equation \eqref{inoutdeg}}\\
	&= (k+1)|R| + \frac{\delta}{k-1}|A_{\text{bad}}|.
\end{align*}
Dividing by $(k+1)$ gives $\frac{|V(D)|}{k+1}\geq |R| + \frac{\delta}{k^2-1}|A_{\text{bad}}|$. 
We made the initial choice of $k$ so that $\frac{\delta}{k^2-1} \geq 1$. Then $\frac{|V(D)|}{k+1}\geq |R| + |A_{\text{bad}}| \geq |R|+|P|$. This completes the proof. 

\end{proof}

\section{Proof of Theorem~\ref{thm-1/(1+delta)}}\label{section-1/(1+delta)}

For convenience, we recall Theorem \ref{thm-1/(1+delta)}.
\thmGeneraldelta

The proof strategy is to start with the partition $V(D)=R\sqcup B\sqcup A$ provided by Algorithm~\ref{alg:RAB} (with $k=\ell=\delta)$ . While there is still a vertex $v\in A$ with $\dist(B,v) \geq \lceil\frac{\delta}{2}\rceil + 1$, we will perform the following modification. We find a special vertex  $u_0\in A$ to add to $R$. This vertex $u_0$ will be the first vertex in $A$ on a path that witnesses $\dist(B,v)$. Next, we add $N^{\le \delta}_A(u_0)$ to $B$. We also find a safe set of vertices $A'\subset A$ to set aside. We will ensure that $|A'\cup N^{\le \delta}_A(u_0)|\geq \delta$. This means that each vertex $u_0$ added to $R$ is put in correspondence to some set $A'\cup N^{\le \delta}_A(u_0)$ of size $\geq \delta$. This ensures the final prekernel $R$ satisfies $|R|+\delta|R|\leq |V(D)|$. The following (slightly technical) lemma formalizes this idea.

\begin{lemma}\label{Lemma:keyFordelta}
Let $D$ be a digraph with minimum in-degree $\delta$, and let $L\sqcup K\sqcup M$ be a partition of $V(D)$ satisfying the following conditions. (In Algorithm 2 below, $L$ corresponds to $R\cup B$, $K$ corresponds to $A_{\text{open}}$, and $M$ corresponds to $A_{\text{fixed}}$.)
    \begin{enumerate}[(C1)]
        \item for every vertex $u \in K$, we have $|N_K^{\leq \delta} (v)| < \delta$,
        \item there are no arcs from $M$ to $K$.
    \end{enumerate}
    Suppose further that $\max\{\dist(L,u) : u \in K\} = k+1$ for some $k \geq \lceil \frac{\delta}{2} \rceil$ and $v\in K$ is chosen such that $\dist(L,v) = k+1$. Then there exists a vertex $u_0\in K$ and set $P\subset K$ so that the following hold. 
    \begin{enumerate}[(1)]
        \item $v \in N \coloneqq N_K^{\leq \delta}(u_0)$,
        \item $P \subset K\smallsetminus N$, 
        \item $|N|+|P|\geq \delta$,
        \item every vertex in $P$ has distance at most $\left\lfloor \frac{\delta}{2} \right\rfloor$ from $L$, and
        \item there are no arcs from $P$ to $K\smallsetminus(N\cup P)$.
    \end{enumerate}
\end{lemma}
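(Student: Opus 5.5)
The plan is to build $u_0$ and $P$ from directed paths that end at $v$, exploiting the fact that (C1) forbids long paths inside $K$.

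\emph{Structural facts.} First I record what the hypotheses force.
\begin{enumerate}[(i)]
\item Every directed path lying entirely in $K$ has length at most $\delta-1$. A path $x_0x_1\dots x_\delta$ in $K$ would give $|N_K^{\le\delta}(x_0)|\ge\delta$, contradicting (C1).
\item Every in-neighbor of a vertex of $K$ lies in $K\cup L$, by (C2).
\item Take a shortest path $w_0w_1\dots w_{k+1}=v$ with $w_0\in L$. Then $w_1,\dots,w_{k+1}\in K$: the path cannot re-enter $K$ after visiting $M$, and it cannot revisit $L$ by minimality.
\end{enumerate}
By (i) we get $k\le\delta-1$. In particular $\dist_K(w_j,v)\le\delta$ for every $j\ge1$.

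\emph{Building a long backward path.} Starting from $v$, I repeatedly pick an in-neighbor. By (ii) each step stays in $K$ until it first hits $L$, and by (i) this happens within $\delta$ steps. I want to choose this backward path to maximize the number of its vertices that lie in $K$, i.e.\ to maximize the length of the part inside $K$. This is the stage where minimum in-degree $\delta$ is used. If every in-neighbor of the current vertex were in $L$, that vertex would be at distance $1$ from $L$. Hence all vertices far from $L$ have all $\delta$ of their in-neighbors in $K$, which supplies many vertices to draw on.

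\emph{Choosing $u_0$, $N$ and $P$.} I take $u_0=w_j$ for a suitable index $j$ on the chosen path through $v$, with $N=N_K^{\le\delta}(u_0)$. Then (1) holds because the segment $w_j\dots w_{k+1}$ lies in $K$ and has length at most $\delta$. For $P$ I take the out-closure, inside $K\smallsetminus N$, of the initial segment $w_1,\dots,w_{j-1}$ (and, if needed, of in-neighbors of $u_0$ lying near $L$). Call this $\mathrm{Cl}(X)$: the set of vertices reachable from $X$ by paths in $K$ that avoid $N$.

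Property (5) is automatic for an out-closure. Property (2) holds by construction. For (4) I choose $j\approx k+1-\lceil\delta/2\rceil$. Since $k\ge\lceil\delta/2\rceil$, the initial segment then has distance at most $\lfloor\delta/2\rfloor$ from $L$. Bounding the distance of the rest of the closure needs a separate check: any vertex of $\mathrm{Cl}(X)$ that is too far from $L$ could instead be absorbed into $N$ by moving $u_0$. I also expect to exploit the maximality of $\dist(L,v)=k+1$: any vertex reachable from $u_0$ inside $K$ has distance at most $k+1$.

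\emph{Main obstacle.} The hard step is (3), namely proving $|N|+|P|\ge\delta$. Here $N$ automatically contains $w_{j+1},\dots,w_{k+1}$, which gives about $\lceil\delta/2\rceil$ vertices, and $P$ contains $w_1,\dots,w_{j-1}$. So the path alone gives only roughly $k$ vertices, and the remaining deficit must come from in-degree. My first attempt will be a case split on $\dist(L,u_0)$.
\begin{itemize}
\item If some in-neighbor of a vertex on the segment is itself in $K$ and lies off the path, it lies within $\lfloor\delta/2\rfloor$ of $L$ (when taken near the start) and joins $P$, or it joins $N$.
\item Otherwise all such in-neighbors lie in $L$, and then I shift the index $j$ to lengthen $N$.
\end{itemize}
If this bookkeeping fails, the fallback is to choose $u_0$ to maximize $|N_K^{\le\delta}(u_0)|$ among the vertices $w_1,\dots,w_{j}$ that reach $v$ within $\delta$ steps. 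I would then argue that the maximizer's closure, together with $N$, must contain a full set of $\delta$ in-neighbors of some vertex near $L$.
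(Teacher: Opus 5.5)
Your proposal has a genuine gap. You correctly identify (3) as the crux, but you never prove it: neither the case split nor the fallback is carried out. Property (4) is checked only on the path segment. For the rest of the closure you defer to ``absorbing'' far vertices into $N$ by moving $u_0$, but that changes $N$ and $P$, so it is not an argument.

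The case split also targets the wrong dichotomy. Every path vertex $w_i$ with $i\ge 2$ has $\dist(L,w_i)=i\ge 2$, so none of its in-neighbors lies in $L$, and by (C2) none lies in $M$. Your second bullet can therefore only ever concern $w_1$. The dichotomy that matters is whether an in-neighbor lies inside or outside the out-closure of the path.

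Finally, choosing $u_0=w_j$ with $j>1$ buys nothing. It also places $u_0$ itself in your $P$ via the arc $(w_{j-1},w_j)$, so your count partly counts $u_0$. That is useless in Algorithm~\ref{alg:AopenAfixed2}, where $u_0$ goes to $R$.

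The missing idea is a budget argument from (C1). Since $K$ has no path of length $\delta$, the out-closure inside $K$ of any $x\in K$ equals $N_K^{\le\delta}[x]$ and has at most $\delta$ vertices. The argument then runs as follows.
\begin{enumerate}[(a)]
\item Take $u_0=w_1$. Let $X$ be the out-closure of $w_2$ in $K$, so $X\supseteq\{w_2,\dots,w_{k+1}\}$ and $|X|\ge k$. Put $\hat X=X\cup\{w_1\}\subseteq N\cup\{w_1\}$, so $|\hat X|\le\delta$.
\item The vertex $w_2$ has at least $\delta$ in-neighbors, all in $K$. At most $|\hat X|-1$ of them lie in $\hat X$, because $w_2\in\hat X$ is not its own in-neighbor. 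So at least $\delta-|\hat X|+1$ in-neighbors $u'_i$ lie outside $\hat X$.
\item Let $P$ be the out-closure of the $u'_i$ in $K$, minus $N$. Then (5) is immediate from closure. Also (3) follows from $N\cup P\supseteq(\hat X\smallsetminus\{w_1\})\cup\{u'_i\}$ together with $N\cap P=\emptyset$.
\item For (4), let $w\in P\smallsetminus X$ be reachable from $u'_i$, and let $y_1$ be the first $K$-vertex on a shortest path from $L$ to $u'_i$. Since $X$ is out-closed and $u'_i,w\notin X$, the set $N_K^{\le\delta}[y_1]$ contains a path from $y_1$ to $w$ with at least $\dist(L,w)$ vertices, all avoiding $X$. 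It also contains all of $X$, via the arc $(u'_i,w_2)$.
\item Hence $\dist(L,w)+|X|\le\delta$, so $\dist(L,w)\le\delta-k\le\lfloor\delta/2\rfloor$. The only other possible vertex of $P$ is $w_1$, which is at distance $1$ from $L$.
\end{enumerate}
Step (e) is exactly where $k\ge\lceil\delta/2\rceil$ enters, and nothing in your sketch plays this role.
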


We first prove the main theorem. The proof of Lemma~\ref{Lemma:keyFordelta} is postponed to the end of the section. 
\begin{proof}[Proof of Theorem \ref{thm-1/(1+delta)}]
If $\delta=1$, then the statement reduces to Proposition~\ref{prop-c13}. So we may assume $D$ is a digraph with minimum in-degree $\delta\geq 2$. Note $\lceil \frac{3}{2}\delta \rceil = \delta + \lceil \frac{\delta}{2} \rceil$.
By Lemma~\ref{lem:prekernel}, it suffices to find a $\left(\delta + \left\lceil\frac{\delta}{2}\right\rceil\right)$-prekernel of size at most $\frac{|V(D)|}{\delta+1}$. We use Algorithm~\ref{alg:RAB} with $\ell = k =\delta$ to obtain a partition $V(D)=R\sqcup A\sqcup B$.
By the handshaking lemma, there exists a vertex $v_0\in V(D)$ with $d^+(v_0)\geq \delta$. Hence $|N^{\le \delta}(v_0)| \ge \delta$ and $R \neq \emptyset$. If $\max\{\dist(B,a): a\in A\} \leq \left\lceil \frac{\delta}{2} \right\rceil $, then $R$ is the desired prekernel. Otherwise we use  Algorithm~\ref{alg:AopenAfixed2} to modify the prekernel $R$. 
\begin{algorithm}
\caption{An algorithm for modification of $R,A,B$}\label{alg:AopenAfixed2}
\begin{algorithmic}
\Require $\delta \geq 2$, $D$ digraph already partitioned by Algorithm~\ref{alg:RAB} to $R\sqcup A\sqcup B$.
\State $A_{\text{open}} \gets A$
\State $A_{\text{fixed}} \gets \emptyset$
\For{$k = \delta,\delta-1,\dots, \left\lceil\frac{\delta}{2}\right\rceil$}
\While{$\exists v \in A_{\text{open}}$ such that $\dist(B,v) = k+1$}
\State Use Lemma~\ref{Lemma:keyFordelta} with $K = A_{\text{open}}$, $L = R\cup B$, $M = A_{\text{fixed}}$ to obtain $u_0$, $N, P$.
\State $R \gets R \cup \{u_0\}$                          
\State $B \gets B \cup N$  
\State $A_{\text{fixed}} \gets A_{\text{fixed}} \cup P$
\State $A_{\text{open}} \gets A_{\text{open}} \smallsetminus (\{u_0\}\cup N\cup P)$
\EndWhile
\EndFor
\end{algorithmic}
\end{algorithm}

Importantly, conditions (C1) and (C2) needed for Lemma~\ref{Lemma:keyFordelta} continue to hold after each pass of the while loop. (C1) continues to hold trivially since $K\coloneqq A_{\text{open}}$ is only getting smaller. Clearly (C2) holds initially since $A_{\text{fixed}}$ is initialized to be empty. Suppose (C2) holds after some number of passes through the while loop, i.e., there are no arcs from $A_{\text{fixed}}'$ to $A_{\text{open}}'$. We show (C2) still holds at the start of the next pass. At the top of the while loop, we will be setting $M\coloneqq A_{\text{fixed}}'\cup P$ and $K\coloneqq A_{\text{open}}'\smallsetminus (\{u_0\}\cup N\cup P)$. By Lemma~\ref{Lemma:keyFordelta} part (5), there are no arcs from $P$ to $A_{\text{open}}'\smallsetminus (N\cup P)$. It follows that there are no arcs from $M$ to $K$, so (C2) holds. This shows the algorithm does not get stuck on the line it needs to apply Lemma~\ref{Lemma:keyFordelta}. The algorithm clearly terminates since $A_{\text{open}}$ is shrinking each while loop pass. After each pass of the while loop, the following can easily be shown to hold. 
\begin{itemize}
        \item $V(D) = A_{\text{open}} \sqcup A_{\text{fixed}} \sqcup B \sqcup R$,
        \item $B$ consists of vertices with distance at most $\delta$ from $R$ (when $N\coloneqq N_K^{\leq \delta}(u_0)$ is added to $B$, $u_0$ is added to $R$),
        \item $R$ is acyclic (arcs only point backwards according to the order each $u_0$ was added to $R$),
        \item $A_{\text{fixed}}$ contains vertices with distance at most $\left\lfloor \frac{\delta}{2} \right\rfloor$ from $B$ (by Lemma~\ref{Lemma:keyFordelta} part (4)),
        \item $\delta |R| \leq |A_{\text{fixed}} \cup B|$ (by Lemma~\ref{Lemma:keyFordelta} part (3)).
    \end{itemize}  
Moreover, once all iterations are finished, 
    \begin{itemize}
        \item $A_{\text{open}}$ contains vertices with distance at most $\left\lceil \frac{\delta}{2} \right\rceil$ from $B$ (since the algorithm has terminated).
    \end{itemize} 
 Therefore, $R$ is a $(\delta+\left\lceil \frac{\delta}{2}\right\rceil )$-prekernel of size at most $\frac{1}{\delta+1}|V(D)|$. This shows $c_{\delta,\left(\delta + \left\lceil\frac{\delta}{2}\right\rceil+1\right)} = \frac{1}{\delta+1}$. The theorem statement (which is for all $q \geq \delta + \left\lceil\frac{\delta}{2}\right\rceil+1$) directly follows by monotonicity of $c_{\delta,q}$.
\end{proof}

We now prove Lemma~\ref{Lemma:keyFordelta}.
\begin{proof}

\begin{figure}[h]
    \centering
    \includegraphics[width=0.75\linewidth]{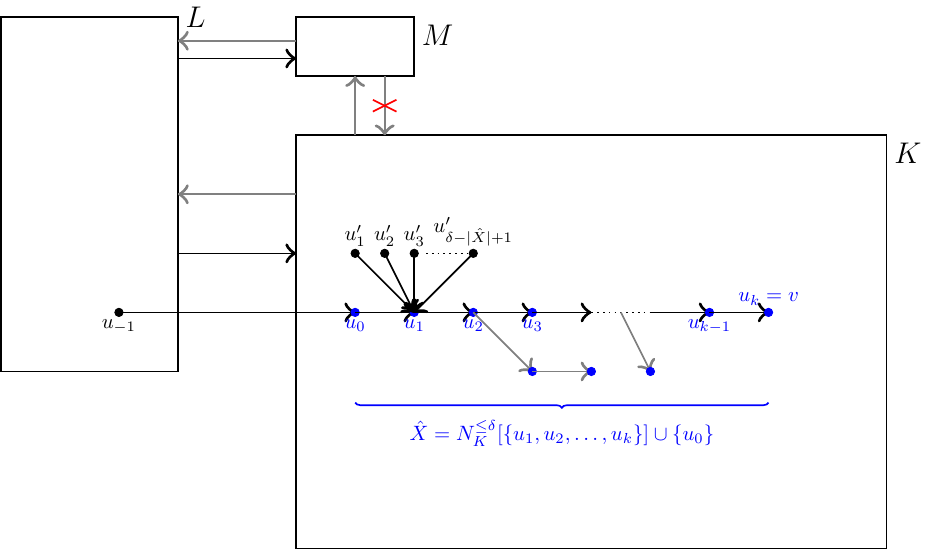}
    \caption{Sets $L$, $K$, and $M$ in  Lemma~\ref{Lemma:keyFordelta}.}
    \label{fig:lkm}
\end{figure}

First, note that since there are no arcs from $M$ to $K$, no path from $L$ to $K$ goes through a vertex in $M$. Next, note that condition (C1) immediately implies there is no path of length $\delta$ in $K$. Since $D$ has minimum in-degree $\delta$, every vertex in $K$ is an endpoint of some path of length $\delta$. Since such a path cannot be completely contained in $K$, we have $\dist(L,u)\leq \delta$ for all $u\in K$. Choose $v\in K$ according to the statement of Lemma~\ref{Lemma:keyFordelta}. Since $\dist(L,v) = k+1$, there exists a path $P = u_0u_1u_2\dots u_k = v$ in $K$ such that for all $0\leq i\leq k$, $\dist(L,u_i) = i+1$. Let $X = N_{K}^{\leq \delta} [\{u_1,\dots, u_k\}]$ and let $\hat{X} = X \cup \{u_0\}$. As $K$ cannot contain any path of length $\delta$, we have $X \subseteq \hat{X} \subseteq N_{K}^{\leq \delta} [u_0]$. Therefore, $k+1 \leq |\hat{X}| \leq \delta$. Before finishing the proof of the theorem, we prove three smaller claims. 

\begin{claim}\label{cl-claim1}
There exist $\delta - (|\hat{X}|-1)$ distinct vertices $u'_1, \dots, u'_{\delta - (|\hat{X}|-1)} \in K\smallsetminus \hat{X}$, which are all in-neighbors of $u_1$.
\end{claim}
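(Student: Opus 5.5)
We need the following: $u_1$ has at least $\delta-(|\hat X|-1)$ in-neighbors in $K\smallsetminus\hat X$. The plan is to show that every in-neighbor of $u_1$ outside $K$ would contradict $\dist(L,u_1)=2$ or (C2), and then to count the in-neighbors that land inside $\hat X$.

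\textbf{Step 1: in-neighbors of $u_1$ lie in $K$.} Since $D$ has minimum in-degree $\delta$, $u_1$ has at least $\delta$ in-neighbors. None lies in $L$: otherwise $\dist(L,u_1)\le 1$, contradicting $\dist(L,u_1)=2$. (This uses $k\ge 1$, so $u_1$ exists; this is fine because $k\ge\lceil\delta/2\rceil\ge 1$.) None lies in $M$, by (C2). Hence $N^-(u_1)\subseteq K$.

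\textbf{Step 2: bound $|N^-(u_1)\cap\hat X|$.} It suffices to show $|N^-(u_1)\cap \hat X|\le |\hat X|-1$. Then at least $\delta-(|\hat X|-1)$ in-neighbors of $u_1$ lie in $K\smallsetminus\hat X$, and these are the required $u'_i$. The natural witness of this inequality is that $u_1$ itself lies in $\hat X$ and is not its own in-neighbor, since $D$ has no loops. So $N^-(u_1)\cap\hat X\subseteq \hat X\smallsetminus\{u_1\}$, which has size $|\hat X|-1$. Note $u_1\in X$, since $X$ is the closed neighborhood of $\{u_1,\dots,u_k\}$.

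\textbf{Main check.} The argument is short, so the main concern is just consistency. First, Step 1 must use the right distance notion: $\dist(L,\cdot)$ is measured in $D$, and the path $u_0\dots u_k$ was chosen with $\dist(L,u_i)=i+1$. Second, the count $\delta-(|\hat X|-1)$ could be nonpositive, in which case the claim is vacuous. It is in fact nonnegative because $|\hat X|\le\delta$. Beyond these points no further obstacle is expected.
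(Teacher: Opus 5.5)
Your argument is correct and is essentially the paper's proof. The in-neighbors of $u_1$ avoid $L$ (since $\dist(L,u_1)=2$) and $M$ (by (C2)), and because $u_1\in\hat X$, at most $|\hat X|-1$ of them lie in $\hat X$, which leaves at least $\delta-(|\hat X|-1)$ in $K\smallsetminus\hat X$.
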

\begin{proof}
There are at most $|\hat{X}|-1$ arcs from $\hat{X}$ to $u_1$ since $u_1\in \hat{X}$. Since $u_1$ has in-degree at least $\delta$ and no in-neighbors in $L$ or $M$, there must exist vertices $u'_1, \dots, u'_{\delta - (|\hat{X}|-1)}\in N_K^-(u_1)\smallsetminus \hat{X}$.
\end{proof}

We direct our attention to the vertices $u'_i$. The first observation is that $X \subseteq N_K^{\leq \delta} [u'_i]$ (again since $u_1 \in N_K^{\leq \delta} [u'_i]$ and $K$ cannot contain a path of length $\delta$).
The second observation is that since $u'_i \notin \hat{X}$, there are no arcs from $X$ to $u_i'$. The arc $(u_0,u_i')$ could be present.
\begin{claim}\label{cl:distOfui2}
    For any $i \in \{1,2,\dots, \delta-|\hat{X}|+1\}$, $\dist(L,u'_i) \leq \delta-|X| \leq \delta - k \leq \left\lfloor \frac{\delta}{2}\right\rfloor$.  
\end{claim}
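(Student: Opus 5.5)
The plan is to bound $\dist(L,u'_i)$ by following a backward path from $u'_i$ and using (C1) to limit how long that path can stay inside $K$. The key observation is that $X$ is closed under reachability within $K$.

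\textbf{Step 1: distances and closure in $K$.} By (C1), $K$ contains no directed path of length $\delta$. Hence whenever $w$ reaches $w'$ by a path inside $K$, we have $\dist_K(w,w')\le \delta-1$. It follows that $X=N_K^{\le\delta}[\{u_1,\dots,u_k\}]$ is exactly the set of vertices reachable inside $K$ from $\{u_1,\dots,u_k\}$. In particular, no vertex of $K\smallsetminus X$ is reachable within $K$ from $X$.

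\textbf{Step 2: a long backward path would violate (C1).} Suppose $w_0w_1\dots w_j=u'_i$ is a directed path with all vertices in $K$.
\begin{itemize}
\item No $w_t$ lies in $X$. Otherwise $u'_i$ would be reachable in $K$ from $X$, so $u'_i\in X\subseteq \hat X$ by Step 1, contradicting the choice of $u'_i$ in Claim~\ref{cl-claim1}.
\item $w_0$ reaches all of $X$ within $K$: follow the path to $u'_i$, then the arc $(u'_i,u_1)$, then continue from $u_1$ inside $X$. By Step 1 every such vertex is within distance $\delta-1$ of $w_0$, so $N_K^{\le\delta}[w_0]\supseteq X\cup\{w_0,\dots,w_j\}$.
\end{itemize}
These two sets are disjoint, so $|N_K^{\le\delta}[w_0]|\ge |X|+j+1$. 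By (C1), $|N_K^{\le\delta}[w_0]|\le\delta$, and therefore $j\le \delta-1-|X|$.

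\textbf{Step 3: conclude the bound.} Since the minimum in-degree is $\delta$, we can greedily build a directed path of length $\delta$ (hence of length at least $\delta-|X|$) ending at $u'_i$, working backwards from $u'_i$. Take the maximal final segment of this path lying in $K$.
\begin{itemize}
\item By Step 2 this segment has length at most $\delta-1-|X|<\delta$, so it is not the whole path, and there is a vertex $y\notin K$ just before it.
\item $y$ has an arc into $K$, so by (C2) $y\notin M$; hence $y\in L$.
\end{itemize}
This gives $\dist(L,u'_i)\le (\delta-1-|X|)+1=\delta-|X|$. Finally, $u_1,\dots,u_k$ are distinct elements of $X$, so $|X|\ge k$, and $k\ge\lceil\delta/2\rceil$ gives $\delta-k\le\lfloor\delta/2\rfloor$.

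The main obstacle is Step 2: recognising that $X$ is reachability-closed in $K$. That closure is what makes the backward path disjoint from $X$ and lets (C1) absorb the term $|X|$.
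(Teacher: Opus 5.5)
Your proof is correct and follows the paper's argument. The key count is the same: the $K$-portion of a path into $u'_i$ avoids $X$ (since $X$ is reachability-closed in $K$), and both it and $X$ lie in the closed $\delta$-neighbourhood of the path's first $K$-vertex, so (C1) gives $\dist(L,u'_i)\le \delta-|X|$. The only difference is bookkeeping: the paper applies the count directly to a shortest $L$-to-$u'_i$ path, taking its existence with interior in $K$ from the lemma's opening remarks, while you bound all in-$K$ paths first and then obtain the entry vertex from $L$ via the greedy backward path and (C2).
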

\begin{proof}
Suppose that $\dist(L,u'_i) = b$. Then there exists a path $P' = v_0v_1\dots v_{b}$ witnessing this with $v_0 \in L$, $v_1,\dots, v_{b} \in K$ and $v_b = u_i'$.
Since $u'_i \notin X$, none of the vertices $v_1,\dots,v_{b-1}$ can lie in $X$. We have $\{v_1,\dots,v_b\}\cup X\subseteq N_K^{\leq \delta}[v_1]$, so we deduce $b+|X|\leq N_K^{\leq \delta}[v_1] \leq \delta$, which implies $b \leq \delta - |X|$.    
\end{proof}

\begin{claim}\label{cl:distneighb}
For any $i \in \{1,2,\dots, \delta-|\hat{X}|+1\}$, $|N_K^{\leq \delta} [u'_i]\smallsetminus X|\leq \delta - k$. Moreover, for every $w \in N_K^{\leq \delta}[u'_i]\smallsetminus X$, we have $\dist(L, w) \leq \delta - k \leq \left\lfloor \frac{\delta}{2}\right\rfloor$.
\end{claim}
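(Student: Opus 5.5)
The plan is to prove both parts of the claim with the same counting device as in Claim~\ref{cl:distOfui2}: find a vertex whose closed $\delta$-neighborhood in $K$ contains $X$ together with a set of vertices disjoint from $X$, and then use (C1) to bound the total. Two facts do most of the work.

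\textbf{Fact 1.} By (C1), every $y\in K$ satisfies $|N_K^{\le\delta}[y]|\le \delta$.

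\textbf{Fact 2.} $X$ is closed under forward reachability inside $K$. Since $K$ contains no path of length $\delta$, any vertex reachable inside $K$ from a vertex of $X$ is within distance $\delta$ of some $u_j$ ($1\le j\le k$), so it already lies in $X$.

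We also use that $|X|\ge k$, because $u_1,\dots,u_k\in X$ are distinct.

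\textbf{First part.} By the first observation after Claim~\ref{cl-claim1}, $X\subseteq N_K^{\le\delta}[u_i']$. Fact~1 applied to $u_i'$ then gives
\[
|N_K^{\le\delta}[u_i']\smallsetminus X| \;\le\; \delta-|X| \;\le\; \delta-k .
\]

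\textbf{Second part.} Fix $w\in N_K^{\le\delta}[u_i']\smallsetminus X$.
\begin{enumerate}
\item As in Claim~\ref{cl:distOfui2}, take a shortest path $v_0v_1\dots v_b=u_i'$ with $v_0\in L$ and $v_1,\dots,v_b\in K$. Such a path avoids $M$ because there are no arcs from $M$ to $K$.
\item The vertex $v_1$ reaches $u_i'$ inside $K$, and $u_i'$ reaches both $w$ and $u_1$ inside $K$. So $v_1$ reaches $w$ and every vertex of $X$ inside $K$. Since $K$ has no path of length $\delta$, all of these are at distance less than $\delta$ from $v_1$. Hence $X\subseteq N_K^{\le\delta}[v_1]$.
\item Let $v_1=y_0y_1\dots y_t=w$ be a shortest path from $v_1$ to $w$ inside $K$. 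No $y_j$ lies in $X$: otherwise Fact~2 would force $w\in X$. So $\{y_0,\dots,y_t\}$ and $X$ are disjoint subsets of $N_K^{\le\delta}[v_1]$.
\item Fact~1 applied to $v_1$ gives $(t+1)+|X|\le\delta$.
\item The path $v_0y_0y_1\dots y_t$ runs from $L$ to $w$ and has length $t+1$. Therefore
\[
\dist(L,w)\;\le\; t+1\;\le\;\delta-|X|\;\le\;\delta-k\;\le\;\left\lfloor \tfrac{\delta}{2}\right\rfloor,
\]
where the last inequality uses $k\ge\lceil\delta/2\rceil$.
\end{enumerate}

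\textbf{Main obstacle.} The delicate point is step 3: certifying that the path from $v_1$ to $w$ avoids $X$, so that the two sets are genuinely disjoint. The forward-closure property in Fact~2 is exactly what handles this, and it rests on the absence of length-$\delta$ paths in $K$.
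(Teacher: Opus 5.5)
Your proof is correct and follows the paper's argument. The first part bounds $|N_K^{\le\delta}[u_i']|$ via (C1), and the second part packs $X$ together with a path from $v_1$ to $w$ into $N_K^{\le\delta}[v_1]$. Where the paper concatenates paths into a walk, you take a shortest $v_1$--$w$ path in $K$ and state the forward-closure of $X$ explicitly, which makes the paper's ``$d \ge c$'' and ``none of them from $X$'' steps more precise.
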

\begin{proof}
First, the number of vertices in $N_K^{\leq \delta} [u'_i]\smallsetminus X$ is trivially upper bounded by $|N_K^{\leq \delta} [u'_i]| - |X| \leq \delta-k$. Let $w \in N_K^{\leq \delta} [u'_i]\smallsetminus X$ and set $c = \dist(L,w)$. Then certainly if we take the path $v_1\dots u_i'$ from proof of Claim~\ref{cl:distOfui2} and then combine it with the path from $u'_i$ to $w$, this is a walk in $K$ containing $d \geq c$ unique vertices, none of them from $X$ (since otherwise $w \in X$).
Since the concatenated paths and $X$ are both contained in $N_K^{\leq \delta}[v_1]$, we obtain that 
$d+|X| \leq N_K^{\leq \delta}[v_1] \leq \delta$. Hence $c \leq d \leq \delta - |X| \leq \delta - k$. 
\end{proof}

We are now ready to complete the proof by verifying (1)-(5) hold.  Define the sets \[
N\coloneqq N_K^{\leq \delta}(u_0) \quad\text{and}\quad  P \coloneqq N_K^{\leq \delta}[u'_1, \dots, u'_{\delta - (|\hat{X}|-1)}] \smallsetminus N.\]
Note (1) and (2) follow immediately. By Claim~\ref{cl-claim1}, the set $P$ contains $\delta-(|\hat{X}|-1)$ vertices distinct from $\hat{X}$. Since $X\subseteq N$, we have $|N|+|P|\geq \left(|\hat{X}|-1 \right)+ \left(\delta-(|\hat{X}|-1)\right)=\delta$ which gives (3). The second part of Claim~\ref{cl:distneighb} shows that $\dist(L,w) \leq \left\lfloor \frac{\delta}{2}\right\rfloor$ for all $w\in P$ giving (4). It follows from the first part of Claim~\ref{cl:distneighb} that $N_K^{\leq \delta}[u'_1]\smallsetminus X = N_K^{\leq \delta-k}[u'_1]\smallsetminus X$  for each $u_i'$. If there were an arc $(x,y)$ from $P$ to $K\smallsetminus(N\cup P)$, then $y\notin P\cup N\implies \dist_K(\{u'_1, \dots, u'_{\delta - (|\hat{X}|-1)}\}, y)=\delta+1$, a contradiction. This establishes (5) and completes the proof. 
\end{proof}

\section{Open questions}\label{section-openquestions}
\subsection{Sharpening the bounds on $c_{\delta,q}$}
The problem of determining precise bounds on $c_{\delta,q}$ is still open for many ranges of $\delta$ and $q$. To briefly recap, we have shown that 
\begin{itemize}
    \item (Theorem~\ref{thm-mono}) The constants $c_{\delta,q}$ are weakly decreasing in $\delta$ and in $q$;
    \item (Theorem~\ref{thm-general q=3}) for all $q \geq 3$ and $\delta \geq 1$,
    \[\frac{1}{\delta+1} \leq c_{\delta,q}\leq \frac{1}{\lfloor\sqrt{\delta+1}\rfloor+1};\]
    \item (Theorem~\ref{thm-1/(1+delta)}) for any $\delta \geq 1$ and $q \geq \left\lceil\frac{3\delta}{2}\right\rceil + 1$, 
    \[\frac{1}{\delta+1} = c_{\delta,q}.\]
\end{itemize}

Recall that the lower bound $\frac{1}{\delta+1}\leq c_{\delta,q}$ follows from the rather trivial Example~\ref{ex-deltaplusone}. We suspect the lower bound is the truth and conjecture the following.
\begin{conj}\label{ourConj}
    For all $\delta\geq 1$ and $q\geq 3$,  $c_{\delta,q}=\frac{1}{\delta+1}$.
\end{conj}
By monotonicity, this conjecture is equivalent to the assertion that $c_{\delta,3}=\frac{1}{\delta+1}$. Since we know $c_{1,3}=\frac{1}{2}$ (Proposition~\ref{prop-c13}), the first place to check for counterexamples is the $\delta=2$ case. A counterexample to the conjecture would show the asymptotic behavior of $c_{\delta,q}$ has the potential to be complex.

\subsection{Variants for $(k,\ell)$-kernels}

In the early 1980s, Kwa{\'s}nik (\cite{kwasnik1980k} \cite{kwasnik1981generalization}) defined a more general object, $(k,\ell)$-kernels. A set $K$ is called a $(k,\ell)$-kernel if it is both \textit{$k$-independent} ($u,v\in K,\, u\neq v\implies \dist(u,v)\geq k$) and \textit{$\ell$-dominating} ($N^{\leq \ell}[K]=V(D)$). Note that the $q$-kernels in our paper are precisely $(2,q)$-kernels. Most of the work on $(k,\ell)$-kernels has been on proving existence results (see \cite{galeana1990existence,galeana2014existence,galeana1998semikernels,ramoul2017new}), as opposed to bounding their size. A future line of work could be determining how minimum in-degree impacts the existence and size of $(k,\ell)$-kernels. To this end, define the constant $c_{\delta, (k,\ell)}$ to be the smallest constant such that every digraph $D$ with minimum in-degree $\delta$ has a $(k,\ell)$-kernel of size at most $c_{\delta, (k,\ell)}|V(D)|$. Leave $c_{\delta, (k,\ell)}$ undefined if there exists a minimum in-degree $\delta$ digraph with no $(k,\ell)$-kernel. Galeana-S\'anchez and Li \cite{galeana1998semikernels} showed that every digraph contains a $(k,2k-2)$-kernel. This is sharp since a directed $C_{2k-1}$ has no $(k,2k-3)$-kernel. First, one might ask if such examples persist if we require large minimum in-degree.
\begin{question}
Let $k\geq 2$. Does there exists a large enough $\delta$ such that all digraphs with minimum in-degree $\delta$ contain a $(k,2k-3)$-kernel? 
\end{question}
For example, the answer is no when $k=2$ (consider any tournament with minimum in-degree $\delta$). After investigating the existence question, one can ask how small $(k,\ell)$-kernels can be made for the regimes when $(k,\ell)$-kernels exist. Since $(k,2k-2)$-kernels always exist, we can ask the following.
\begin{question}
    For $\delta\geq 1$ and $k\geq 2$, estimate the asymptotics of the constants $c_{\delta,(k,2k-2)}$. In particular, is it true that for any $k\geq 2$, $c_{\delta,(k,2k-2)}\to 0$ as $\delta\to\infty$? 
\end{question}

\section*{Acknowledgments}
This project began during the 2025 Graduate Research Workshop in Combinatorics. The workshop was supported by Iowa State University, the Combinatorics Foundation, and the U.S. National
Science Foundation (grant award number 1953445).

The third author was supported by the grant 23-06815M of the Grant Agency of the Czech Republic.

\bibliographystyle{plain}
\bibliography{finalbib} 

\end{document}